\newtheorem{acknowledgement}{Acknowledgement}
\newtheorem{case}{Case}
\newtheorem{lemma}{Lemma}
\newtheorem{proposition}{Proposition}
\newtheorem{remark}{Remark}
\newtheorem{Theorem}{Theorem}
\numberwithin{equation}{section}
\begin{document}
\title[Yamabe invariant]{Free boundary hypersurfaces with nonpositive Yamabe invariant in mean convex manifolds}
\author{A. Barros}
\address{Departamento de Matem\'{a}tica-UFC\\
60455-760-Fortaleza-CE-Br}
\email{abbarros@mat.ufc.br}

\author{C.Cruz}
\address{Departamento de Matem\'{a}tica-UFC\\
60455-760-Fortaleza-CE-Br}
\email{ tiarlos@alu.ufc.br}

\thanks{Authors partially supported by CNPq-Brazil}

\keywords{Scalar curvature, Stability, Yamabe invariant, Free boundary hypersurfaces, Rigidity, CMC foliations.} \subjclass[2000]{Primary 53C42, 53C21;
Secondary 58J60} \urladdr{http://www.mat.ufc.br}

\maketitle

\begin{abstract}
We obtain some estimates on the area of the boundary and on the volume of a certain
free boundary hypersurface $\Sigma$ with nonpositive Yamabe invariant in a Riemannian
$n$-manifold with bounds for the scalar curvature and the mean curvature of the boundary. 
Assuming further that $\Sigma$ is locally volume-minimizing in a manifold $M^n$ with 
scalar curvature bounded below by a nonpositive constant and mean convex boundary, 
we conclude that locally $M$ splits along $\Sigma$. In the case that the scalar curvature of $M$ is at least $-n(n-1)$ and
$\Sigma$ locally minimizes a certain 
functional inspired by \cite{Y}, a neighborhood of $\Sigma$ in $M$ is isometric to $((-\varepsilon,\varepsilon)\times\Sigma,dt^2+e^{2t}g)$,
where $g$ is Ricci flat.

\end{abstract}

\section{Introduction and main results}

In recent years, rigidity involving the scalar curvature has been studied because these problems are motivated by general relativity and have strong
connections with the theory of minimal surfaces. Moreover, the existence of an area-minimizing surface of some kind, enables us to deduce several
rigidity theorems. A deeper result due to Schoen and Yau \cite{SY} asserts that any area-minimizing
surface in a three-manifold $(M, g)$ with positive scalar curvature is homeomorphic either to $\mathbb{S}^2 $ or $\mathbb{RP}^2 $.
Motivated by this, the rigidity of area-minimizing projective planes was studied by Bray et al. \cite{BBEN},
while the case of area-minimizing two-spheres was obtained by Bray, Brendle and Neves in \cite{BBN}. It was also observed by Cai and
Galloway \cite{CG} that a three-manifold with nonnegative scalar curvature is flat in a neighborhood of a two-sided embedded two-torus
which is locally area-minimizing.
 For surfaces of genus $g(\Sigma)>1 $, Nunes \cite[Theorem 3]{N} has obtained an interesting rigidity result for minimal hyperbolic
 surfaces in three-manifolds with scalar curvature bounded by a negative constant. There is also an
  unified point of view with alternative proofs about these cases considered by Micallef and Moraru \cite{MM}.
 For a good reference about other rigidity theorems we refer the reader to \cite{B}.
 
 In higher dimensions, Cai \cite{C} showed a local splitting of an $n$-dimensional manifold $M$ with nonegative
escalar curvature containing a volume-minimizing hypersurface that does not admit a metric of positive
scalar curvature. In this direction, Moraru \cite{M} proved a natural extension of the rigidity result contained in \cite{N}.

In this paper we are interested in studying rigidity of hypersurfaces with boundary. We point out that the boundary geometry can influence the
geometry of the manifold. For example, there is a relationship between the topology of free boundary minimal surfaces and the geometry of the
ambient manifold, such as convexity of the boundary and bounds on the scalar curvature, by means of the second variation of area.
Very recently, Ambrozio \cite{A} established theorems of rigidity for area-minimizing free boundary surfaces in mean convex three-manifolds.
Moreover, if the ambient $M$ has a lower bound on its scalar curvature by a negative constant, there is a rigidity theorem for solutions 
of the Plateau problem for certain homotopically
non-trivial curves in $\partial M$ with length-minimizing boundary.

In order to state our main results we need to introduce the Yamabe invariant for manifolds with boundary.
Let $(\Sigma,g)$ be a compact Riemannian manifold $n\geq3$ with nonempty boundary $\partial \Sigma$.
For $(a,b)\in\mathbb{R}\times\mathbb{R}-\{(0,0)\}$, we define the following functional

\begin{equation}\label{yama}
Q^{a,b}_g(\varphi)=\frac{\int_{\Sigma}\big(\frac{4(n-1)}{n-2}\|\nabla\varphi\|_g^2+R_g\varphi^2\big)d\sigma +2\int_{\partial \Sigma}\kappa_g\varphi^2d\sigma_
{\partial \Sigma}}{\big(a(\int_{\Sigma}\varphi^{\frac{2n}{n-2}}d\sigma)+b(\int_{\partial\Sigma}\varphi^{\frac{2(n-1)}{n-2}}d\sigma_{\partial \Sigma})^{\frac{n}{n-1}}\big)^{\frac{n-2}{n}}},
\end{equation}
where $k_g$ denotes the mean curvature of $\partial \Sigma$, $R_g$ is the scalar
curvature of $\Sigma$, $d\sigma$ and
$d\sigma_{\partial \Sigma}$ denote the volume element of $\Sigma$ and the area element of $\partial \Sigma$, respectively.

The Yamabe constant of $(\Sigma,g)$ is defined by
\begin{equation}\label{yamainv}
\displaystyle Q_g^{a,b}(\Sigma,\partial \Sigma)=\inf_{\varphi\in C^{\infty}(\Sigma,\Bbb{R}^{+})}Q_g^{a,b}(\varphi):(a,b)\in\{(0,1), (1,0)\},
\end{equation}
which is invariant under conformal change of the metric $g$ (see \cite{E}, \cite{E2}).
It is not difficult to verify that $-\infty\leqslant Q_g^{1,0}(\Sigma,\partial \Sigma)\leqslant Q_g^{1,0}(\mathbb{S}^n_+,\partial \mathbb{S}^n_+)$,
where $\mathbb{S}^n_+$ denotes the standard half sphere  and $-\infty\leqslant Q_g^{0,1}(\Sigma,\partial \Sigma)\leqslant Q_g^{0,1}(B^n,\partial B^n)$,
where $B^n$ is the unit ball in $\mathbb{R}^n$ equipped with the canonical metric.

Let $[g]$ and $\mathcal{C}(\Sigma)$ denote the conformal class of a Riemannian metric $g$ and
the space of all conformal classes on $\Sigma$, respectively. We may then define the \textit{Yamabe invariant} of a
compact manifold $\Sigma$ with boundary $\partial\Sigma$ by taking the supremum of the Yamabe constants
over all conformal classes
\begin{equation}\label{def}
\sigma^{a,b}(\Sigma,\partial\Sigma)=\sup_{[g]\in\mathcal{C}(\Sigma)}\inf_{\varphi>0}Q_g^{a,b}(\varphi).
\end{equation}
Schwartz \cite{S} showed that this invariant
is monotonic when attaching a handle over the boundary. As consequence, for example, a handlebody $\mathcal{H}^n$ has maximal invariant, i.e.,
$\sigma^{a,b}(\mathcal{H},\partial \mathcal{H})=\sigma^{a,b}(\mathbb{S}_+^n,\partial \mathbb{S}_+^n)$ for $(a,b)\in\{(0,1), (1,0)\}$.

In a two dimensional Riemannian surface the mean curvature of the boundary coincides with the geodesic curvature of the boundary. Therefore, the
Gauss-Bonnet Theorem implies that the Yamabe invariant of a compact surface $\Sigma$ with boundary $\partial\Sigma$ is given by
a multiple of the Euler characteristic $4\pi\chi(\Sigma)$, where $\chi(\Sigma)$ depends on the genus and
on the number of boundary components of $\Sigma$. In fact, in a certain sense, the Yamabe invariant can be viewed as generalization of the Euler characteristic in higher dimensions.

Let $M^n$ be a Riemannian manifold with boundary $\partial M$. Assume that $M$ contains a properly
embedded hypersurface $\Sigma$ with boundary $\partial\Sigma$. Let $R^M$ and $H^{\partial M}$ denote the scalar curvature of $M$ and
the mean curvature of $\partial M$, respectively. In this work, we let $vol(\Sigma)$ denote the volume ($(n-1)$-dimensional
Hausdorff measure) of $\Sigma$ while $Area(\partial\Sigma)$
denotes the area ($(n-2)$-dimensional Hausdorff measure) of its boundary $\partial\Sigma$, both with respect to the induced metric.

In \cite{SZ}, Shen and Zhu obtained some estimates on the area of compact stable minimal surfaces in three-manifolds
with bounds on the scalar curvature. Moreover, Chen, Fraser and Pang \cite{CFP} obtained the same to the nonempty boundary case and low index.
In the same spirit, in Section \ref{upper}, we obtain some estimates to the volume and area of the boundary
of minimal stable free boundary hypersurfaces in terms either of the scalar curvature or the mean convexity of
the boundary of the ambient manifold. 
Recall that a manifold $M$ is mean convex 
if its boundary $\partial M$ has nonnegative mean curvature everywhere with respect to the outward normal. We have the following theorem:

\begin{Theorem}\label{estimate}
Let $M^{n}$ be a Riemannian manifold ($n\geq4$) with nonempty boundary. Assume that $M$ contains a two-sided
compact properly immersed stable minimal free boundary hypersurface $\Sigma^{n-1}$ whose induced metric is denoted by $g$.
\begin{itemize}
\item [i)] Suppose that $M$ has mean convex boundary and $\inf R^M<0$. Then, if $\sigma^{1,0}(\Sigma,\partial\Sigma)<0$,
the volume of $\Sigma$ satisfies
$$vol(\Sigma)^{\frac{2}{n-1}}\geq \frac{Q_g^{1,0}(\Sigma,\partial\Sigma)}{\inf R^M} \geq\frac{\sigma^{1,0}(\Sigma,\partial\Sigma)}{\inf R^M}.$$

\item [ii)]Suppose that $M$ has nonnegative scalar curvature and $\inf H^{\partial M}<0$.
Then, if $\sigma^{0,1}(\Sigma,\partial\Sigma)<0$, the area of $\partial\Sigma$ satisfies
$$Area(\partial\Sigma)^{\frac{1}{n-2}}\geq \frac{1}{2}\Big(\frac{Q_g^{0,1}(\Sigma,\partial\Sigma)}{\inf H^{\partial M}}\Big)\geq\frac{1}{2}\Big(\frac{\sigma^{0,1}(\Sigma,\partial\Sigma)}{\inf H^{\partial M}}\Big).$$
\end{itemize}
\end{Theorem}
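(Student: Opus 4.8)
The plan is to feed the stability inequality of the free boundary minimal hypersurface $\Sigma$ through the Gauss equation, so that the ambient scalar curvature and the boundary data of $M$ surface, and then to recognise on the left-hand side the numerator of the Yamabe quotient $Q_g^{a,b}$ from \eqref{yama} applied to the $(n-1)$-dimensional manifold $\Sigma$ (that is, with $n$ replaced by $n-1$ everywhere). Since $\Sigma$ is two-sided I fix a global unit normal $N$ and test the second variation of area with the normal variation $\varphi N$. Stability reads
\[
\int_\Sigma\big(\mathrm{Ric}^M(N,N)+|A|^2\big)\varphi^2\,d\sigma+\int_{\partial\Sigma}\mathrm{II}^{\partial M}(N,N)\varphi^2\,d\sigma_{\partial\Sigma}\le\int_\Sigma|\nabla\varphi|^2\,d\sigma,
\]
where $A$ is the second fundamental form of $\Sigma$ and $\mathrm{II}^{\partial M}$ is the second fundamental form of $\partial M$ with respect to the outward unit normal.

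Next I would rewrite both curvature terms intrinsically. As $\Sigma$ is minimal, the Gauss equation yields $\mathrm{Ric}^M(N,N)+|A|^2=\tfrac12\big(R^M-R_g\big)+\tfrac12|A|^2$, where $R_g$ is the induced scalar curvature. For the boundary term I would invoke the free boundary condition: $\Sigma$ meets $\partial M$ orthogonally, so the outward conormal of $\partial\Sigma$ in $\Sigma$ coincides with the outward normal of $\partial M$, while $N$ is tangent to $\partial M$ along $\partial\Sigma$. Splitting the trace of $\mathrm{II}^{\partial M}$ along $T\partial M=T\partial\Sigma\oplus\mathbb{R}N$, and noting that the restriction of $\mathrm{II}^{\partial M}$ to $T\partial\Sigma$ equals the second fundamental form of $\partial\Sigma$ in $\Sigma$, gives the identity $\mathrm{II}^{\partial M}(N,N)=H^{\partial M}-\kappa_g$, with $\kappa_g$ the mean curvature of $\partial\Sigma$ in $\Sigma$. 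Substituting both and multiplying by $2$, stability becomes
\[
2\!\int_\Sigma|\nabla\varphi|^2+\int_\Sigma R_g\varphi^2+2\!\int_{\partial\Sigma}\kappa_g\varphi^2\ge\int_\Sigma R^M\varphi^2+\int_\Sigma|A|^2\varphi^2+2\!\int_{\partial\Sigma}H^{\partial M}\varphi^2 .
\]
Because $\tfrac{4(n-2)}{n-3}\ge2$ for $n\ge4$, replacing $2\int_\Sigma|\nabla\varphi|^2$ by $\tfrac{4(n-2)}{n-3}\int_\Sigma|\nabla\varphi|^2$ only enlarges the left-hand side, which then equals the numerator $\mathcal N(\varphi)$ of $Q_g^{a,b}(\varphi)$; hence $\mathcal N(\varphi)\ge\int_\Sigma R^M\varphi^2+\int_\Sigma|A|^2\varphi^2+2\int_{\partial\Sigma}H^{\partial M}\varphi^2$ for every $\varphi$.

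For part (i) I would discard the nonnegative terms $\int_\Sigma|A|^2\varphi^2$ and (by mean convexity) $2\int_{\partial\Sigma}H^{\partial M}\varphi^2$, and estimate $\int_\Sigma R^M\varphi^2\ge\inf R^M\int_\Sigma\varphi^2$. Dividing by the denominator of $Q_g^{1,0}$ and applying Hölder's inequality in the form $\int_\Sigma\varphi^2\le\big(\int_\Sigma\varphi^{\frac{2(n-1)}{n-3}}\big)^{\frac{n-3}{n-1}}vol(\Sigma)^{\frac{2}{n-1}}$, the hypothesis $\inf R^M<0$ reverses the estimate and produces $Q_g^{1,0}(\varphi)\ge \inf R^M\,vol(\Sigma)^{\frac{2}{n-1}}$ for all $\varphi$. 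Taking the infimum over $\varphi$ and dividing once more by $\inf R^M<0$ gives the first inequality; the second is $Q_g^{1,0}(\Sigma,\partial\Sigma)\le\sigma^{1,0}(\Sigma,\partial\Sigma)$ from \eqref{def} divided by $\inf R^M<0$. Part (ii) is parallel: I would keep only the boundary term, bound $2\int_{\partial\Sigma}H^{\partial M}\varphi^2\ge2\inf H^{\partial M}\int_{\partial\Sigma}\varphi^2$, divide by the denominator of $Q_g^{0,1}$, and use the boundary Hölder inequality $\int_{\partial\Sigma}\varphi^2\le\big(\int_{\partial\Sigma}\varphi^{\frac{2(n-2)}{n-3}}\big)^{\frac{n-3}{n-2}}Area(\partial\Sigma)^{\frac{1}{n-2}}$ together with $\inf H^{\partial M}<0$.

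The routine steps are the Gauss substitution, the coefficient comparison $\tfrac{4(n-2)}{n-3}\ge2$, and the two Hölder estimates. The step I expect to require the most care is the boundary identity $\mathrm{II}^{\partial M}(N,N)=H^{\partial M}-\kappa_g$: this is exactly where the free boundary (orthogonality) hypothesis enters, and keeping the sign conventions for the outward normal, for $\mathrm{II}^{\partial M}$, and for $\kappa_g$ mutually consistent is what makes the intrinsic term $+2\int_{\partial\Sigma}\kappa_g\varphi^2$ of \eqref{yama} appear with the correct sign. Finally, the hypotheses $\sigma^{a,b}(\Sigma,\partial\Sigma)<0$ guarantee $Q_g^{a,b}(\Sigma,\partial\Sigma)<0$, so that the resulting inequalities are genuine positive lower bounds for $vol(\Sigma)$ and $Area(\partial\Sigma)$.
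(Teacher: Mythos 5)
Your proposal is correct and follows essentially the same route as the paper's own proof: the stability inequality tested with $\varphi$, the Gauss equation, the free boundary identity $\Pi(N,N)=H^{\partial M}-\kappa_g$, the coefficient comparison $\frac{4(n-2)}{n-3}\geq 2$ to recover the Yamabe numerator, and the H\"older plus sign-reversal argument (using $\inf R^M<0$, resp.\ $\inf H^{\partial M}<0$) leading to $Q_g^{a,b}$ and then $\sigma^{a,b}$. The only cosmetic difference is that you carry the $\|h^{\Sigma}\|^2$ term slightly longer before discarding it.
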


The above inequalities are a consequence of the second variation of the volume as well as the definition
of the Yamabe invariant on manifold with boundary. Although volume estimates are interesting in itself,
the estimate given in item i) plays an important role in one of our rigidity results.

 We will now establish the following local rigidity result.

 \begin{Theorem}\label{princ}
Let $M^{n}$ be a Riemannian manifold ($n\geq4$) with mean convex boundary $\partial M$ such that $R^M$
is bounded from below. Let $\Sigma^{n-1}$ be a two-sided, compact, properly embedded, free boundary hypersurface which is locally volume-minimizing.

\begin{itemize}
 \item [I)]\label{ite1}If  $\inf R^M< 0$ and  $\sigma^{1,0}(\Sigma,\partial\Sigma)<0$, then
 \begin{equation}\label{ineq}
vol(\Sigma)\geq\Big(\frac{\sigma^{1,0}(\Sigma,\partial\Sigma)}{\inf R^M}\Big)^{\frac{n-1}{2}}.
\end{equation}
 Moreover, if equality holds, then in  a neighborhood of $\Sigma,\, M$ is isometric to the product
$(-\varepsilon,\varepsilon)\times \Sigma$
for some $\varepsilon>0$, with the product metric $dt^2+g$, where $g$ is the induced metric on $\Sigma$ which is Einstein such that the scalar curvature
is negative (in fact, equal to $\inf R^M$) and $\partial\Sigma$ is a minimal hypersurface with respect to the induced metric.

\item [II)]\label{ite2}If  $R^M\geqslant0$ and  $\sigma^{1,0}(\Sigma,\partial\Sigma)\leqslant0$, then in a neighborhood of $\Sigma,\, M$
is isometric to the product metric $dt^2+g$ in $(-\varepsilon,\varepsilon)\times \Sigma$ for some $\varepsilon>0$,
where $g$ is the induced metric on $\Sigma$ that is Ricci flat and $\partial\Sigma$
is a minimal hypersurface in the induced metric.
\end{itemize}

\end{Theorem}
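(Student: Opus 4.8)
The inequality \eqref{ineq} is an immediate consequence of Theorem~\ref{estimate}. A locally volume-minimizing free boundary hypersurface is, in particular, a two-sided, compact, properly embedded, stable minimal free boundary hypersurface, so item i) applies and gives $vol(\Sigma)^{2/(n-1)}\ge \sigma^{1,0}(\Sigma,\partial\Sigma)/\inf R^M$; since $\inf R^M<0$ and $\sigma^{1,0}<0$ make both sides positive, raising to the power $(n-1)/2$ yields \eqref{ineq}. The substance is the equality case, and I would begin by retracing the proof of Theorem~\ref{estimate} to extract its pointwise content. Equality in \eqref{ineq} forces equality throughout that argument: the conformal test function must be the Yamabe minimizer and must be constant, since the gradient coefficient $\tfrac{4(n-2)}{n-3}$ is strictly larger than the $2$ coming from the stability inequality, so the extra gradient term can vanish only if $\nabla\varphi\equiv 0$; moreover the nonnegative quantities that were discarded must vanish. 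This gives $A\equiv 0$ (so $\Sigma$ is totally geodesic), $R^M\equiv \inf R^M$ along $\Sigma$, $H^{\partial M}\equiv 0$ along $\partial\Sigma$, and $Q^{1,0}_g(\Sigma,\partial\Sigma)=\sigma^{1,0}(\Sigma,\partial\Sigma)$, with the stability form vanishing on constants. As the constant is then the first eigenfunction of the Jacobi operator with eigenvalue zero, its equation and the associated free boundary (Robin) condition force $\mathrm{Ric}^M(\nu,\nu)\equiv 0$ on $\Sigma$ and $\kappa_g\equiv 0$ on $\partial\Sigma$; in particular $\partial\Sigma$ is minimal in $\Sigma$, and the traced Gauss equation gives $R^\Sigma\equiv \inf R^M$.

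Next I would construct, via the implicit function theorem, a foliation $\{\Sigma_t\}_{|t|<\varepsilon}$ of a neighborhood of $\Sigma$ by free boundary hypersurfaces of constant mean curvature $h(t)$, with $\Sigma_0=\Sigma$, each $\Sigma_t$ meeting $\partial M$ orthogonally, and with strictly positive lapse $\rho_t$ normalized by $\rho_0\equiv 1$. This is possible precisely because the linearization of the prescribed-constant-mean-curvature problem with orthogonality constraint at $\Sigma$ is the Jacobi operator with Robin condition, whose kernel modulo constants is trivial (first eigenvalue zero, constant eigenfunction); the free boundary version of the construction follows Ambrozio~\cite{A}. Since each leaf is diffeomorphic to $\Sigma$ and the Yamabe invariant is a diffeomorphism invariant, $\sigma^{1,0}(\Sigma_t,\partial\Sigma_t)=\sigma^{1,0}(\Sigma,\partial\Sigma)<0$ for all $t$.

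The crux is to prove $h\equiv 0$. Writing $V(t)=vol(\Sigma_t)$ one has $V'(t)=h(t)\int_{\Sigma_t}\rho_t\,d\sigma_t$, so local volume-minimality of $\Sigma_0$ forces $h(t)\le 0$ for $t<0$ and $h(t)\ge 0$ for $t>0$. For the reverse, I would differentiate the mean curvature along the foliation, integrate the resulting Jacobi-type identity over $\Sigma_t$, and convert the Laplacian term into a boundary integral through the free boundary condition; substituting the traced Gauss equation, the bound $R^M\ge \inf R^M$, the mean convexity $H^{\partial M}\ge 0$, and bounding $\int_{\Sigma_t}R^{\Sigma_t}\rho_t+2\int_{\partial\Sigma_t}\kappa_{g_t}\rho_t$ from below by the common Yamabe invariant of the leaves together with the equality in \eqref{ineq}, one obtains a differential inequality forcing $h(t)$ to have the opposite sign. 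Hence $h\equiv 0$ and every leaf is minimal; re-running the equality analysis on each leaf shows each $\Sigma_t$ is totally geodesic with $\mathrm{Ric}^M(\nu_t,\nu_t)\equiv 0$ and $R^M\equiv \inf R^M$. Finally, with $h\equiv 0$, $A_t\equiv 0$ and $\mathrm{Ric}^M(\nu_t,\nu_t)\equiv 0$, the lapse solves $\Delta_{\Sigma_t}\rho_t=0$ with Neumann condition and so is constant on each leaf; reparametrizing in $t$ makes the metric $dt^2+g_t$, and $A_t\equiv 0$ gives $\partial_t g_t\equiv 0$, so the neighborhood is the product $((-\varepsilon,\varepsilon)\times\Sigma,\,dt^2+g)$.

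It remains to identify $g$. Because $Q^{1,0}_g(\Sigma,\partial\Sigma)=\sigma^{1,0}(\Sigma,\partial\Sigma)$ and the Yamabe minimizer in $[g]$ is constant, $g$ both minimizes in its conformal class and maximizes over conformal classes; a metric realizing the Yamabe invariant in this way is a critical point of the total scalar curvature functional and is therefore Einstein, with $R^\Sigma\equiv \inf R^M<0$ and minimal boundary, proving (I). For (II) the scheme is identical with $\inf R^M$ replaced by $0$: stability, the Gauss equation, and mean convexity now give a nonnegative numerator, whence $Q^{1,0}_g\ge 0$, while $Q^{1,0}_g\le \sigma^{1,0}\le 0$ forces $Q^{1,0}_g=\sigma^{1,0}=0$; the equality analysis then runs verbatim and the Einstein metric with vanishing scalar curvature is Ricci flat. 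The main obstacle I anticipate is the differential inequality of the third paragraph in the free boundary setting: keeping accurate track of the conormal derivative of the lapse and of the boundary term relating $\kappa_{g_t}$, $H^{\partial M}$ and the second fundamental form of $\partial M$, and verifying that the leafwise Yamabe invariant can indeed be inserted to close the sign argument. The foliation construction with the Robin condition and the implication that a Yamabe-invariant-realizing metric with constant minimizer is Einstein are the remaining delicate points.
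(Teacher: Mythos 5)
Your overall strategy coincides with the paper's: inequality (\ref{ineq}) from Theorem \ref{estimate}; infinitesimal rigidity from the equality analysis, the Robin eigenvalue problem, and the variational characterization of metrics realizing the Yamabe invariant (Proposition \ref{propig}); a CMC free boundary foliation via the implicit function theorem (Proposition \ref{folhea}); a Moraru-type differential inequality controlling $H'(t)$ through the leafwise Yamabe quotients (Proposition \ref{proimp}); and constancy of the lapse plus vanishing second fundamental form to get the product metric (Proposition \ref{final}). Item II is treated the same way in both.

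However, one step in your third paragraph is false as stated: local volume-minimality of $\Sigma_0$ does \emph{not} force $h(t)\ge 0$ for $t>0$ and $h(t)\le 0$ for $t<0$. A local minimum of $V(t)=vol(\Sigma_t)$ at $t=0$ gives only $V(t)\ge V(0)$; the derivative $V'(t)=h(t)\int_{\Sigma_t}\rho_t\,d\sigma_t$ can change sign arbitrarily often on either side of $0$ (consider $V(t)=V(0)+t^4(2+\sin(1/t))$), so no pointwise sign information on $h$ can be read off from minimality alone. The repair is exactly the paper's route and uses only ingredients you already have: the differential-inequality argument (your ``reverse'' direction, which in Case 1 of Proposition \ref{proimp} is not a formal sign flip but a genuine contradiction argument via the Mean Value Theorem, the constants $K_0, K_1$, and shrinking $\varepsilon$) yields $h(t)\le 0$ for $t\in[0,\varepsilon)$ and $h(t)\ge 0$ for $t\in(-\varepsilon,0]$, hence by (\ref{RV}) $vol(\Sigma_t)\le vol(\Sigma)$ for all $t$; combining with local volume-minimality gives $vol(\Sigma_t)=vol(\Sigma)$, so $V'\equiv 0$ and, the lapse being positive, $h\equiv 0$. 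With this substitution your argument closes: every leaf is volume-minimizing, hence satisfies equality in (\ref{ineq}) and is infinitesimally rigid, and the splitting follows as you describe.
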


The proof of the local splitting relies on a construction of a one-parameter family of
properly embedded free boundary hypersurfaces with constant mean curvature. 
This, together with the resolution of the Yamabe problem for compact manifolds with
boundary, implies that each hypersurface has the same volume. For this volume comparison, we adapt a technique developed by Moraru \cite{M}.
After, we exhibited an isometry from $(-\varepsilon,\varepsilon)\times \Sigma$
into a neighborhood of $\Sigma$.

In view of the result in Theorem \ref{princ}, it is interesting to know what happens when, in higher dimension, 
the ambient manifold has scalar curvature bounded below by
a positive constant and mean convex boundary. We observe that an estimate as (\ref{ineq}) cannot occur. 
For example, let $M:= \Sigma\times\mathbb{R}$ be a manifold equipped with the product metric, where 
$\Sigma=\mathbb{S}^{n-2}_+\times \mathbb{S}^1(r)$ and $\mathbb{S}^1(r)$ is the circle of positive radius $r$.
 Note that $M$ has positive scalar curvature and nonnegative
mean curvature of the boundary, while the volume of $\Sigma$ is arbitrarily large when $r$ increases.

Now, consider a variation of $\Sigma$ given by smooth mappings  $\mathfrak{f}:(-\varepsilon,\varepsilon)\times\Sigma\rightarrow M $, $\varepsilon>0$,
such that $\mathfrak{f}(t,\cdot)$ is an embedding and $\mathfrak{f}(t,\partial\Sigma)$ is contained in $\partial M$ for all $t\in(-\varepsilon,\varepsilon)$.
 We can associate to $\mathfrak{f}$ a function $\mathcal{V}: (-\varepsilon,\varepsilon)\rightarrow\mathbb {R}$ defined by
 \begin{equation}\label{vol}
\mathcal{V}(t)=\int_{[0,t]\times\Sigma}\mathfrak{f}^*dV,  
 \end{equation}
that measure the signed $n$-dimensional volume enclosed between $\mathfrak{f}(0,\cdot)$ and $\mathfrak{f}(t,\cdot)$.

We state our next rigid result that gives a small contribution to the theory proving appropriate extensions of the result contained 
in \cite[Theorem 3.2]{Y}.

\begin{Theorem}\label{quart}
Let $M$ be a Riemannian $n$-dimensional manifold with scalar curvature $R^M\geq -n(n-1)$ and mean convex boundary. Assume that $M$
contains a two-sided, compact, properly embedded, free boundary hypersurface $\Sigma$ such that $\sigma^{1,0}(\Sigma,\partial\Sigma)\leq0$.
If $\Sigma$ locally minimizes the functional $vol(\Sigma)-(n-1)\mathcal{V}(0),$ then $\Sigma$ has a neighborhood in $M$ which is isometric to
$(-\varepsilon,\varepsilon)\times \Sigma$ with the metric $dt^2+e^{2t}g$ Ricci flat,
where $g$ is the induced metric on $\Sigma$ and $\partial\Sigma$ is a minimal hypersurface with respect to the induced metric by $g$.
\end{Theorem}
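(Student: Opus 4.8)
The plan is to activate the local-minimizing hypothesis through the first and second variations of $\mathcal F=\mathrm{vol}(\cdot)-(n-1)\mathcal V$. Vanishing of the first variation forces $\Sigma$ to be a free boundary hypersurface of constant mean curvature $H=n-1$ meeting $\partial M$ orthogonally, while nonnegativity of the second variation gives, for every $f\in C^\infty(\Sigma)$, the stability inequality
\[
\int_\Sigma\|\nabla f\|^2\,d\sigma-\int_\Sigma\big(\|A\|^2+\mathrm{Ric}^M(N,N)\big)f^2\,d\sigma-\int_{\partial\Sigma}\mathrm{II}^{\partial M}(N,N)f^2\,d\sigma_{\partial\Sigma}\ \ge\ 0,
\]
where $A$ is the second fundamental form and $N$ the unit normal of $\Sigma$. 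The free boundary condition yields the pointwise identity $\mathrm{II}^{\partial M}(N,N)=H^{\partial M}-\kappa_g$ along $\partial\Sigma$ (decomposing $T\partial M=T\partial\Sigma\oplus\mathbb{R}N$). Substituting the traced Gauss equation $\|A\|^2+\mathrm{Ric}^M(N,N)=\tfrac12(R^M-R^\Sigma+H^2+\|A\|^2)$ and inserting $H=n-1$, $\|A\|^2\ge H^2/(n-1)=n-1$, $R^M\ge -n(n-1)$ and $H^{\partial M}\ge0$, the curvature side becomes nonnegative since $-n(n-1)+(n-1)^2+(n-1)=0$, and I obtain the reduced inequality
\[
\int_\Sigma\|\nabla f\|^2\,d\sigma+\tfrac12\int_\Sigma R^\Sigma f^2\,d\sigma+\int_{\partial\Sigma}\kappa_g f^2\,d\sigma_{\partial\Sigma}\ \ge\ 0\qquad\text{for all }f.
\]

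The next step is to confront this with the Yamabe functional of the $(n-1)$-dimensional manifold $\Sigma$, whose gradient coefficient is $\tfrac{4(n-2)}{n-3}\ge 2$. Doubling the reduced inequality and comparing term by term (the $R^\Sigma$- and $\kappa_g$-terms match exactly, and $\tfrac{4(n-2)}{n-3}\ge2$ only increases the gradient term), the numerator of $Q_g^{1,0}$ is nonnegative for every $\varphi$, so $Q_g^{1,0}(\Sigma,\partial\Sigma)\ge0$. Since $Q_g^{1,0}(\Sigma,\partial\Sigma)\le\sigma^{1,0}(\Sigma,\partial\Sigma)\le0$, it must vanish, and by the resolution of the Yamabe problem for manifolds with boundary (Escobar) it is realized by a minimizer $\varphi_0$. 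The strict gap $\tfrac{4(n-2)}{n-3}>2$ then forces $\nabla\varphi_0\equiv0$, so $\varphi_0$ is constant; feeding $f\equiv\varphi_0$ back into the chain of inequalities makes every estimate an equality pointwise. This is the \emph{infinitesimal rigidity} of $\Sigma$: the hypersurface is totally umbilic with $A=\mathrm{Id}$ (all principal curvatures equal to $1$), $R^M\equiv-n(n-1)$ on $\Sigma$, $H^{\partial M}\equiv0$ along $\partial\Sigma$, while $(\Sigma,g)$ is scalar-flat with minimal boundary $\kappa_g\equiv0$ (from the Euler–Lagrange equations at the constant minimizer).

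To pass from the single leaf to a neighborhood, I would construct, exactly as in the proof of \thmref{princ}, a one-parameter family $\{\Sigma_t\}_{t\in(-\varepsilon,\varepsilon)}$ of properly embedded free boundary hypersurfaces of constant mean curvature with $\Sigma_0=\Sigma$, foliating a neighborhood of $\Sigma$; this uses the implicit function theorem applied to the free boundary CMC operator, whose linearization is the Jacobi operator with the Robin condition $\partial_\nu\rho=\mathrm{II}^{\partial M}(N,N)\rho$, inverted modulo the constants. Writing $H(t)$ for the (spatially constant) mean curvature of $\Sigma_t$ and $\rho_t>0$ for the lapse, I would propagate the rigidity: since the Yamabe invariant is a smooth invariant one has $\sigma^{1,0}(\Sigma_t,\partial\Sigma_t)=\sigma^{1,0}(\Sigma,\partial\Sigma)\le0$ for all $t$, and integrating the evolution identity $H'(t)\,\mathrm{vol}(\Sigma_t)=-\int_{\partial\Sigma_t}(H^{\partial M}-\kappa_{g_t})\rho_t-\int_{\Sigma_t}(\|A_t\|^2+\mathrm{Ric}^M(N,N))\rho_t$ together with the Gauss equation and the scalar bound yields a differential inequality for $H(t)$ whose equilibrium is $H\equiv n-1$. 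Combined with $H(0)=n-1$ and the sign constraint imposed by the minimizing property (so that $H(t)-(n-1)$ has the sign of $t$), this should pin $H(t)\equiv n-1$ and force all the equalities to persist on every leaf. Once each $\Sigma_t$ is umbilic with $A_t=\mathrm{Id}$ and $R^M\equiv-n(n-1)$, the Riccati equation $A'+A^2+R_N=0$ gives $R^M(X,N,N,X)\equiv-|X|^2$, whence $\mathrm{Ric}^M\equiv-(n-1)\bar g$ (so $g=g_0$ is Ricci flat); the lapse then satisfies $\Delta\rho_t=0$ with Neumann condition, hence is constant on leaves, and $\partial_t g_t=2\rho_t g_t$ reparametrizes to $dt^2+e^{2t}g$.

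The main obstacle is precisely this propagation step. The infinitesimal rigidity at $\Sigma_0$ only delivers scalar-flatness of the leaf, whereas the theorem asserts Ricci-flatness, which is equivalent to the ambient being Einstein $\mathrm{Ric}^M=-(n-1)\bar g$ near $\Sigma$; extracting this requires controlling the full Riccati evolution along the foliation rather than a single trace identity, and the free boundary terms $\int_{\partial\Sigma_t}(H^{\partial M}-\kappa_{g_t})\rho_t$ must be handled carefully to keep the differential inequality for $H(t)$ sharp. I expect the cleanest implementation to follow Moraru's volume comparison, showing that the renormalized volume $e^{-(n-1)t}\mathrm{vol}(\Sigma_t)$ is locally constant and that equality in the leaf-wise version of \thmref{estimate} propagates the pointwise rigidity to the whole slab.
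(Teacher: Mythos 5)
Most of your outline tracks the paper's actual proof: the first variation gives $H=n-1$ and the free boundary condition; $\mathcal{J}$-stability combined with the Gauss equation, the identity $\Pi(N,N)=H^{\partial M}-\kappa_g$, and the bounds $R^M\geq -n(n-1)$, $\|A\|^2\geq H^2/(n-1)$, $H^{\partial M}\geq 0$ forces $Q_g^{1,0}(\Sigma,\partial\Sigma)=\sigma^{1,0}(\Sigma,\partial\Sigma)=0$; the constancy of the Yamabe minimizer (from the gap $\frac{4(n-2)}{n-3}>2$) turns everything into pointwise equalities; and the CMC free boundary foliation, the contradiction argument pinning $H(t)\equiv n-1$ via the Yamabe quotient and the minimizing property of $\mathcal{J}$, the constant lapse, and the evolution $\partial_t g_t=2\ell_t g_t$ giving $dt^2+e^{2t}g$ are exactly Propositions~\ref{folhea}, \ref{proimp} (adapted) and Lemma~\ref{rambr} as used in Section~\ref{five}.

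The genuine gap is the step you yourself flag as ``the main obstacle'': Ricci flatness of $g$, and your proposed fix cannot close it. The Riccati equation $A'+A^2+R_N=0$ along the foliation only controls ambient curvatures of planes containing $N$: with $A_t=\mathrm{Id}$ it gives $R^M(X,N,N,X)=-|X|^2$ and hence $\mathrm{Ric}^M(N,N)=-(n-1)$, but it says nothing about the tangential components $\mathrm{Ric}^M(X,Y)$ for $X,Y\in T\Sigma$. By the contracted Gauss equation with $h^{\Sigma}=g$ one has $\mathrm{Ric}^{\Sigma}(X,X)=\mathrm{Ric}^M(X,X)-R^M(X,N,N,X)+(n-2)|X|^2$, so Ricci flatness of the leaf is \emph{equivalent} to $\mathrm{Ric}^M|_{T\Sigma}=-(n-1)g$, which neither the Riccati evolution nor a Moraru-type volume comparison delivers. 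The paper gets this already at the infinitesimal level, by a variational mechanism absent from your proposal (the argument of Proposition~\ref{propig}, invoked in Proposition~\ref{lastprop}): since $Q_g^{1,0}(\Sigma,\partial\Sigma)=\sigma^{1,0}(\Sigma,\partial\Sigma)=0$, the metric $g$ \emph{attains} the Yamabe invariant, so for every symmetric $(0,2)$-tensor $h$ the function $r\mapsto Q_{g+rh}^{1,0}(\Sigma,\partial\Sigma)$ has a maximum at $r=0$ and its derivative vanishes there; computing this derivative (the scalar curvature of $g$ is constant by the Gauss equation, and the divergence terms disappear by Stokes' theorem) and choosing $h$ to be the traceless Ricci tensor of $g$ forces that tensor to vanish identically, i.e.\ $g$ is Einstein, and Einstein together with scalar-flat gives Ricci flat. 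Without this metric-variation argument (or a substitute for it), your proof establishes the splitting $dt^2+e^{2t}g$ with $g$ only scalar flat, not the Ricci flatness asserted in the theorem.
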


\begin{remark}
In \cite{E1}, Escobar showed that $Q_g^{1,0}(\Sigma,\partial\Sigma)$ is positive (zero, negative)
if and only if $Q_g^{0,1}(\Sigma,\partial\Sigma)$ is positive (zero, negative). Therefore, we can prove item II) in Theorem \ref{princ}
and Theorem \ref{quart} by changing $\sigma^{1,0}(\Sigma,\partial\Sigma)$ by $\sigma^{0,1}(\Sigma,\partial\Sigma)$.
\end{remark}

The outline of the paper is as follows: In Section \ref{prelim}, 
we recall facts about the Yamabe problem with boundary and the stability of free boundary hypersurfaces. In Section \ref{upper}, 
we give some estimates on the volume and area of the boundary of minimal stable free boundary hypersurfaces
which will also help in the proofs of the main results. Finally, in Section \ref{four} and \ref{five} we prove Theorems \ref{princ} and \ref{quart}.

\begin{acknowledgement}
 The authors would like to thank Levi Lima for valuable discussions and
Lucas Ambrozio for many useful comments on an earlier version of this paper.
\end{acknowledgement}

\section {Preliminaries and Notations}\label{prelim}

In this section, we shall review known results about minimal free boundary hypersurfaces and their stability and some
fundamental facts about the Yamabe problem with boundary that play a fundamental role in the proof of our theorems.

\subsection{Free boundary condition and stability}
Let $M$ be a compact Riemannian manifold with nonempty boundary $\partial M $ and consider $\mathfrak{f}:\Sigma\rightarrow M$ a
compact hypersurface properly immersed, i.e., $\mathfrak{f}$ is an immersion and
$\mathfrak{f}(\Sigma)\cap\partial M =\mathfrak{f}(\partial\Sigma)$. We say that $\Sigma$ is free boundary if $\Sigma$ meets $\partial M $ orthogonally
along $\partial\Sigma$. Thus, if $X$ is a normal vector field along $\Sigma$ we have that $X$
is tangent to $\partial M $ along $\partial\Sigma$.

Suppose $\Sigma$ is two-sided in the sense that carries a smooth unit normal vector field $N$ globally defined on $\Sigma$. This allows us to consider 
that any normal vector field can be written as $X=\varphi N$, where $\varphi\in C^{\infty}(\Sigma)$.
We now consider a one-parameter family of properly immersed hypersurfaces
$\mathfrak{f}(t,\cdot):\Sigma\rightarrow M$ for $t\in(-\varepsilon,\varepsilon)$ with initial velocity
$$
\frac{\partial}{\partial t} \mathfrak{f}(t,\cdot)\big|_{t=0}=X,
$$
where $\mathfrak{f}((-\varepsilon,\varepsilon)\times \partial \Sigma)$ is contained in $\partial M$ and $\mathfrak{f}(0,\cdot) = \mathfrak{f}$.
From now on we assume that $\mathfrak{f}(t,\Sigma)=\Sigma_t$. A well known standard computation gives
the first variation formula of volume
\begin{equation}\label{variaare}
\delta\Sigma(\varphi)=\int_{\Sigma}H\varphi d\sigma +\int_{\partial\Sigma}\langle X, \nu \rangle d\sigma_{\partial \Sigma},
\end{equation}where $H$ is the mean curvature of $\Sigma$ in $M$. From (\ref{variaare}), we note that $\Sigma$ is a
critical point to the variational problem if and only if $\Sigma$ is minimal and $\Sigma$ is free boundary.

Next, we define the following function $\mathcal {J}: (-\varepsilon,\varepsilon)\rightarrow\mathbb {R}$ given by
$\mathcal{J}(t)=vol(\Sigma_t)-(n-1) \mathcal{V}(t)$. By taking into account that 
$$
\mathcal{V}'(0)=\int_{\Sigma}\varphi d\sigma,  
$$
which was proved in \cite[Lemma 2.1(ii)]{BCE}, we obtain
\begin{equation}\label{primvaria}
\mathcal{J}'(0)=\int_{\Sigma}(H-(n-1))\varphi d\sigma +\int_{\partial\Sigma}\langle X, \nu \rangle d\sigma_{\partial \Sigma}.
\end{equation}
Analogously, we have that $\Sigma$ is critical to this variational problem if and only if $H=n-1$ and $\Sigma$ is free boundary.

The Jacobi operator, sometimes called stability operator, is a second order linear operator $\mathcal{L}$ given by
$\mathcal{L}=\Delta_{\Sigma}+Ric(N,N)+\|h^{\Sigma}\|^2$, where $\Delta_{\Sigma}$ is the Laplacian, $Ric$ is the Ricci curvature of $M$
and $h^\Sigma$ is the second fundamental form of $\Sigma$ with respect to the unit normal $N$.

The second variation of volume of a minimal hypersurface is given by
\begin{eqnarray}\label{secvar}
\delta^2\Sigma(\varphi,\varphi)=-\int_{\Sigma}\varphi\mathcal{L}\varphi d\sigma+\int_{\partial \Sigma}(\frac{\partial\varphi}{\partial \nu}-\Pi(N,N)\varphi )\varphi d\sigma_{\partial \Sigma},
\end{eqnarray}
where $\Pi$ denotes the second fundamental
form of $\partial M$ with respect to the inner unit normal vector. Assuming now that $\Sigma$ is critical, the second variation formula
$\mathcal{J}''(0)$ coincides with $\delta^2\Sigma(\varphi,\varphi)$ (see \cite[Proposition 3.5]{CR}).

We recall that the index of a minimal hypersurface is defined as the maxima dimension of any subspace of
$C^{\infty}(\Sigma)$ on which $\delta^2\Sigma(\varphi,\varphi)$ is negative definite. Roughly speaking, it measures the number of
independent directions in which the hypersurface fails to minimize volume. We say that a minimal two-sided
hypersurface $\Sigma$ is stable if and only if $\delta^2\Sigma(\varphi,\varphi)\geqslant0$ for all $\varphi\in C^{\infty}(\Sigma)$ or,
equivalently, the index of $\Sigma$ is equal to zero.
Note that if a hypersurface $\Sigma$ is locally volume-minimizing, then $\Sigma$ is stable and minimal. Similarly,
$\Sigma$ is $\mathcal{J}$-stable when $\mathcal{J}''(0)\geq0$ for all normal variation of $\Sigma$.

\subsection{Basic facts about the Yamabe problem on manifolds with boundary}

Consider a compact $(n-1)$-dimensional Riemannian manifold $(\Sigma,g)$ with nonempty boundary $\partial \Sigma$.
The Yamabe problem asserts that any Riemannian metric on a closed manifold is conformal
to a metric with constant scalar curvature. This problem was completely solved after works of Trundiger \cite{Tr}, Aubin \cite{Au}
and Schoen \cite{Sc}. There are two ways to extend this problem to manifolds with boundary,
the first is to find a metric $\tilde{g}=\varphi^{\frac{4}{n-3}}g$, conformally related to $g$, such that $R_{\tilde{g}}$ is constant equal to $C\in \mathbb{R}$ and
$\kappa_{\tilde{g}}$ is zero which is equivalent to the existence of a critical point of the functional $Q_g^{1,0}(\varphi)$ for all smooth
positive function $\varphi$ on $\Sigma$ satisfying
\begin{equation}\label{minimi0}
\left\{\begin{matrix}
\Delta_{\Sigma}\varphi-\frac{n-3}{4(n-2)}R_g\varphi+\frac{n-3}{4(n-2)}C\varphi^\frac{n+1}{(n-3)}=0\ & \textrm{in $\Sigma$}\\
\frac{\partial \varphi}{\partial \nu}+\frac{n-3}{2(n-2)}\kappa_g\varphi=0 & \textrm{on $\partial\Sigma$,}
\end{matrix}\right.
\end{equation}
where $\nu$ is the outward normal vector to $\partial\Sigma$. The second natural extension is to find a conformal scalar-flat metric
on $\Sigma$ which has as boundary a hypersurface with constant mean curvature equal to $K\in \mathbb{R}$ which corresponds, in analytical terms,
to find a positive solution of
\begin{equation}\label{minimi}
\left\{\begin{matrix}
\Delta_{\Sigma}\varphi-\frac{n-3}{4(n-2)}R_g\varphi=0 & \textrm{in $\Sigma$}\\
\frac{\partial \varphi}{\partial \nu}+\frac{n-3}{2(n-2)}k_g\varphi=\frac{n-3}{2(n-2)}K\varphi^{\frac{n-1}{n-3}} & \textrm{on $\partial\Sigma$.}
\end{matrix}\right.
\end{equation}

Since $Q_g^{1,0}$ and $Q_g^{0,1}$ do not satisfy the Palais-Smale condition, standard variational methods
cannot guarantee the existence of minimizers. Escobar \cite{E} proved that if
$Q_g^{1,0}(\Sigma,\partial\Sigma)<Q_g^{1,0}(\mathbb{S}^{n-1}_+,\partial \mathbb{S}^{n-1}_+)$,
then there exists a minimizing solution to problem (\ref{minimi0}), where $C$ has the same sign as $Q_g^{1,0}(\Sigma,\partial \Sigma)$.
When $Q_g^{0,1}(\Sigma,\partial\Sigma)$ is finite and
$Q_g^{0,1}(\Sigma,\partial\Sigma)<Q_g^{0,1}(B^{n-1},\partial B^{n-1})$,
there exists a smooth metric of flat scalar curvature and mean curvature on the boundary equal to $K$ that
has the same sign as  $Q_g^{0,1}(\Sigma,\partial\Sigma)$.
There are a lot of interesting papers related with this subject,
we indicate for instance \cite{E}, \cite{E1}, \cite{E2}, \cite{M1}, \cite{M} and \cite{Al}.

\section{Lower bounds to the volume of $\Sigma$ and the area of $\partial\Sigma$}\label{upper}

\begin{proof}[Proof of Theorem \ref{estimate}]
Initially, we observe that Gauss equation implies
\begin{equation}\label{gauss}
Ric(N,N)=\frac{1}{2}(R^M-R_g+ H^2-\|h^{\Sigma}\|^2).
\end{equation}

We use (\ref{gauss}) in the stability condition to obtain
\begin{equation}\label{mud2}
\int_{\Sigma}\Big(2\|\nabla \varphi\|_g^2+(R_g-R^M-\|h^{\Sigma}\|^2)\varphi^2\Big)d\sigma-2\int_{\partial\Sigma}\Pi(N,N)\varphi^2d\sigma_{\partial\Sigma}\geq0.
\end{equation}

On the other hand, since $\Sigma$ meets $\partial M$
orthogonally along $\partial\Sigma$, we have that the unit conormal vector $\nu$ of $\partial\Sigma$ that points outside $\Sigma$
coincides with the inner unit normal $Z$ of $\partial M$ that points outside $M$. Therefore  
$$
\kappa_g=\sum_{i=1}^{n-2}\langle\nabla_{e_i}\nu,e_i\rangle=\sum_{i=1}^{n-2}\langle\nabla_{e_i}Z,e_i\rangle,
$$
where $\{e_1,\ldots,e_{n-2}\}$ is
an orthonormal basis for $T\partial\Sigma$. Thus, we obtain
\begin{equation}\label{free}
\Pi(N,N)=H^{\partial M}-\kappa_g \textrm{\;\;\;\;along  $\partial \Sigma$},
\end{equation}
where $H^{\partial M}$ is the mean curvature of $\partial M$ with respect to the inner unit conormal.

By using that $a_n=\frac{4(n-2)}{n-3}>2$ for all $n\geq4$ and (\ref{free}), it follows that

\begin{eqnarray}\label{desi}
0&\leq &\int_{\Sigma}\Big(a_n\|\nabla \varphi\|_g^2+R_g\varphi^2\Big)d\sigma-\int_{\Sigma}R^M\varphi^2d\sigma\nonumber\\
&-&\int_{\partial\Sigma}2H^{\partial M}\varphi^2d\sigma_{\partial\Sigma}+\int_{\partial\Sigma}2\kappa_g\varphi^2d\sigma_{\partial\Sigma}.
\end{eqnarray}

Next we use H\"older's inequality to deduce $$\inf R^M\int_{\Sigma}\varphi^2d\sigma\geq\inf R^Mvol(\Sigma)^{\frac{2}{n-1}}\Big(\int_{\Sigma}\varphi^{\frac{2(n-1)}{n-3}}d\sigma\Big)^{\frac{n-3}{n-1}}.$$
Whence we have
\begin{eqnarray}
\inf R^Mvol(\Sigma)^{\frac{2}{n-2}}\Big(\int_{\Sigma}\varphi^{\frac{2(n-1)}{n-3}}d\sigma\Big)^{\frac{n-3}{n-1}}&\leq&\int_{\Sigma}\big(a_n\|\nabla \varphi\|_g^2+R_g\varphi^2\big)d\sigma\nonumber\\
&+&\int_{\partial\Sigma}2\kappa_g\varphi^2d\sigma_{\partial\Sigma}.\nonumber
\end{eqnarray}

Thus for a smooth positive function $\varphi$, we obtain

\begin{equation}\label{yama2}
\inf R^Mvol(\Sigma)^{\frac{2}{n-2}}\leq\frac{\int_{\Sigma}\big(a_n\|\nabla\varphi\|_g^2+R_g\varphi^2\big)d\sigma +2\int_{\partial \Sigma}\kappa_g\varphi^2d\sigma_{\partial M}}{\Big(\int_{\Sigma}\varphi^{\frac{2(n-1)}{n-3}}d\sigma\Big)^{\frac{n-3}{n-1}}}.
\end{equation}

 Using the definition of the Yamabe constant (\ref{yamainv}) into (\ref{yama2}), we have
\begin{eqnarray}\nonumber
\inf R^Mvol(\Sigma)^{\frac{2}{n-2}}&\leq& Q_g^{1,0}(\Sigma,\partial\Sigma)\leq \sigma^{1,0}(\Sigma,\partial\Sigma),\nonumber
\end{eqnarray}
where we used the definition of Yamabe's invariant (\ref{def}). Thus we complete the proof of our first item.

Reasoning as in the previous case, for a smooth positive function $\varphi$, we obtain 

\begin{equation}\label{yama3}
2\inf H^{\partial M}Area(\partial\Sigma)^{\frac{1}{n-2}}\leq\frac{\int_{\Sigma}\big(a_n\|\nabla\varphi\|_g^2+R_g\varphi^2\big)d\sigma +2\int_{\partial \Sigma}\kappa_g\varphi^2d\sigma_{\partial M}}{\Big(\int_{\partial\Sigma}\varphi^{\frac{2(n-2)}{n-3}}d\sigma\Big)^{\frac{n-3}{n-2}}}.
\end{equation}

Therefore, we get
\begin{eqnarray}\nonumber
\inf H^{\partial M}Area(\partial\Sigma)^{\frac{1}{n-2}} &\leq& \frac{1}{2}Q_g^{0,1}(\Sigma,\partial\Sigma)\leq \frac{1}{2}\sigma^{0,1}(\Sigma,\partial\Sigma),\nonumber
\end{eqnarray}which completes the proof of the theorem.
\end{proof}

\section{Proof of Theorem \ref{princ}}\label{four}

\subsection{Infinitesimal rigidity}
By previous section we obtain inequality (\ref{ineq}). Now, our next goal is to construct a CMC
foliation of free boundary hypersurfaces. Before, we prove the following result.

\begin{proposition}\label{propig}
Suppose that in $\Sigma$ inequality (\ref{ineq}) becomes equality. Then $\Sigma$ is totally geodesic, $R^M=\inf R^M$ and $Ric(N,N)=0$ along $\Sigma$, while
$H^{\partial M}=0$ and $\Pi(N,N)=0$ along $\partial\Sigma$, the boundary $\partial\Sigma$ is 
a minimal hypersurface with respect to the induced metric. Moreover, the induced metric on $\Sigma$ is Einstein.
\end{proposition}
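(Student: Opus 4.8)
The plan is to revisit the chain of inequalities produced in the proof of \thmref{estimate}(i) and squeeze out its equality cases. Equality in (\ref{ineq}) says $vol(\Sigma)^{2/(n-1)}=\sigma^{1,0}(\Sigma,\partial\Sigma)/\inf R^M$, and since $\inf R^M<0$ this collapses the whole chain
\[
\inf R^M\, vol(\Sigma)^{2/(n-1)}\;\le\;Q_g^{1,0}(\Sigma,\partial\Sigma)\;\le\;\sigma^{1,0}(\Sigma,\partial\Sigma)
\]
into a string of equalities; in particular $Q_g^{1,0}(\Sigma,\partial\Sigma)=\sigma^{1,0}(\Sigma,\partial\Sigma)$, so the metric $g$ realizes the Yamabe invariant. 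Because $Q_g^{1,0}(\Sigma,\partial\Sigma)\le\sigma^{1,0}<0<Q^{1,0}(\mathbb{S}^{n-1}_+,\partial\mathbb{S}^{n-1}_+)$, Escobar's existence theorem supplies a smooth positive minimizer $u$ with $Q_g^{1,0}(u)=Q_g^{1,0}(\Sigma,\partial\Sigma)$. Feeding this $u$ back into the derivation of \thmref{estimate}(i), where the two outer quantities now agree, forces every single inequality used there to be an equality for $\varphi=u$.

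I would then read off these equality conditions one by one. The H\"older step is tight only when $u$ is constant, so (after scaling) take $u\equiv 1$; equivalently, the step where $2\|\nabla\varphi\|^2$ was enlarged to $a_n\|\nabla\varphi\|^2$ and the term $-\|h^{\Sigma}\|^2\varphi^2$ was dropped gives $\int_\Sigma\|h^\Sigma\|^2=0$ together with $\nabla u=0$, whence $\Sigma$ is totally geodesic. Saturation of the mean-convexity estimate $\int_{\partial\Sigma}H^{\partial M}u^2\ge 0$ forces $H^{\partial M}\equiv0$ along $\partial\Sigma$, and saturation of $\int_\Sigma R^M u^2\ge \inf R^M\int_\Sigma u^2$, with $u>0$ everywhere, forces $R^M\equiv\inf R^M$ on $\Sigma$.

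Next I would use that the stability inequality (\ref{secvar}) is itself saturated at $u\equiv1$, i.e. $\delta^2\Sigma(u,u)=0$. Since this quadratic form is nonnegative and vanishes at $u$, $u$ minimizes it, hence solves the Euler--Lagrange problem $\mathcal{L}u=0$ in $\Sigma$ with $\partial u/\partial\nu=\Pi(N,N)u$ on $\partial\Sigma$. For $u$ constant and $h^\Sigma=0$ these reduce to $Ric(N,N)=0$ on $\Sigma$ and $\Pi(N,N)=0$ on $\partial\Sigma$. Combining $\Pi(N,N)=0$ and $H^{\partial M}=0$ with (\ref{free}) gives $\kappa_g=0$, so $\partial\Sigma$ is minimal in $(\Sigma,g)$; and the Gauss identity (\ref{gauss}) with $H=0$ and $h^\Sigma=0$ yields $R_g=R^M=\inf R^M$, a negative constant.

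The last and most delicate claim is that $(\Sigma,g)$ is Einstein, for which constancy of $R_g$ is not enough. Here I would exploit that $g$ maximizes the Yamabe constant over its conformal class (the minimizer $u$ is constant) and over \emph{all} conformal classes (since $Q_g^{1,0}(\Sigma,\partial\Sigma)=\sigma^{1,0}(\Sigma,\partial\Sigma)$). In the negative regime the constant-scalar-curvature, minimal-boundary representative of a conformal class is unique up to scale and varies smoothly with the class, so $g$ is a critical point of the normalized total scalar curvature functional $g'\mapsto vol(g')^{-(n-3)/(n-1)}\big(\int_\Sigma R_{g'}\,d\sigma+2\int_{\partial\Sigma}\kappa_{g'}\,d\sigma_{\partial\Sigma}\big)$ among all metrics on $\Sigma$. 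Its first variation has interior part $-\int_\Sigma\langle Ric_g-\tfrac{R_g}{n-1}g,\dot g\rangle\,d\sigma$, and requiring it to vanish in every (transverse) direction forces the traceless Ricci tensor to vanish, giving $Ric_g=\tfrac{R_g}{n-1}g$, i.e. $g$ is Einstein with scalar curvature $\inf R^M<0$. The main obstacle is precisely this step: justifying the smooth dependence of the constant-scalar-curvature gauge on the conformal class and controlling the boundary contributions of the first variation, which is exactly where the normalization $\kappa_g=0$ obtained above is used.
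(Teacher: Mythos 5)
Your proposal is correct and follows essentially the same route as the paper: saturating every inequality in the proof of Theorem~\ref{estimate} to get a constant Yamabe minimizer, totally geodesic $\Sigma$, $H^{\partial M}=0$ and $R^M=\inf R^M$; extracting $Ric(N,N)=0$ and $\Pi(N,N)=0$ from the vanishing of the second variation at the constant function (the paper phrases this via the first eigenvalue of the Robin problem~(\ref{rob}), which is the same Euler--Lagrange argument you give); and proving the Einstein condition by noting that $g$ attains $\sigma^{1,0}(\Sigma,\partial\Sigma)$, so it is a critical point of the normalized total scalar curvature among all metrics, whose first variation (with boundary terms killed by $\kappa_g=0$ and constancy of $R_g$) forces the traceless Ricci tensor to vanish. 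This matches the paper's proof step for step, including the key use of uniqueness and smooth dependence of the negative-case Yamabe representative with minimal boundary.
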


\begin{proof}
From the resolution of Yamabe's problem, there exists $\varphi_{min}>0$
for which the infimum in $Q_g^{1,0}(\Sigma,\partial \Sigma)$ is achieved.
If inequality (\ref{ineq}) becomes equality in $\Sigma$, then
it follows that all inequalities in the proof of Theorem \ref{estimate}  are in fact equalities. 

Firstly, note that $\Sigma$ is totally geodesic
and $H^{\partial M}=0$ along $\partial \Sigma$.
Since we use the strict inequality $a_n-2>0$ to obtain (\ref{desi}) we have that $\|\nabla \varphi_{min}\|_g^2=0$
which implies that $\varphi_{min}$ is constant.

Consider the following Robin-type boundary value problem:
\begin{equation}\label{rob}
\left\{\begin{matrix}
-\mathcal{L}\phi= \lambda\phi& \textrm{in $\Sigma$}\\
\frac{\partial}{\partial \nu}\phi=\Pi(N,N)\phi & \textrm{on $\partial\Sigma.$}
\end{matrix}\right.
\end{equation}
Let $\lambda_1$ be the first eigenvalue of the above problem. It is well known that
$$\lambda_1=\inf_{\int_{\Sigma}\phi^2=1}\Big(\int_{\Sigma}\big(\|\nabla\phi\|_g^2+(Ric(N,N)+\|h^{\Sigma}\|^2)\phi^2\big)d\sigma +\int_{\partial \Sigma}\Pi(N,N)\phi^2d\sigma_
{\partial \Sigma}\Big).$$

It follows from $\delta^2\Sigma(\varphi_{min},\varphi_{min})=0$ that $\lambda_1=0$. Thus, the constant functions satisfy (\ref{rob}) and
we obtain that $\Pi(N,N)=0$ and $Ric(N,N)=0$. Note also that equality in (\ref{mud2}) implies $R^M=\inf R^M$ along $\Sigma$.
It remains to prove that $\Sigma$ carries an Einstein metric. In fact, given any smooth symmetric (0,2)-tensor $h$, 
we define a family of Riemannian metrics $\{g(r)\}_{r\in(-\epsilon,\epsilon)}$, where $g(r)=g+rh$.
 From the resolution of the Yamabe problem on manifold with boundary there
 exists a unique positive function  $u_r>0$ such that  $\tilde{g}(r)=u_r^{\frac{4}{n-3}}g(r)$ has constant scalar curvature equal
to $Q_{g(r)}^{1,0}(\Sigma,\partial\Sigma)<0$
 and zero mean curvature on the boundary for all $r\in(-\epsilon,\epsilon)$. 
  Note that since the Yamabe invariant does not
depend on $r$, we have $Q_{g(r)}^{1,0}(\Sigma,\partial\Sigma)\leq\sigma^{1,0}(\Sigma,\partial\Sigma)$ for all $r\in(-\epsilon,\epsilon)$, i.e.,
$\frac{\partial}{\partial r}Q_{g(r)}^{1,0}(\Sigma,\partial\Sigma)$ equal to zero at $r=0$ provided $\sigma^{1,0}(\Sigma,\partial\Sigma)$ is maximum for
$Q_{g(r)}^{1,0}(\Sigma,\partial\Sigma)$ as a function of $r$.

On the other hand, it is well known that the derivative of the scalar curvature satisfies

$$\frac{\partial}{\partial r}R_{\tilde{g}(r)}\Big|_{r=0}=\textrm{div}(\textrm{div} h-dtr_{\tilde{g}(0)}h)-\langle Ric^{\Sigma},h\rangle,$$
where $Ric^{\Sigma}$ denotes the Ricci curvature on $\Sigma$.

We notice that since
 $\tilde{g}(0)$ and $g$ are in the same conformal class with the same scalar curvature and mean curvature on the boundary
 up to scaling, the uniqueness of Yamabe's problem for manifold with boundary for $inf R^M<0$ and
the  boundary being minimal imply that $\tilde{g}(0)=g.$

 Let $vol(\Sigma,\tilde{g}(r))$ denote the volume of $\Sigma$ in the metric $\tilde{g}(r)$
for $r\in(-\epsilon,\epsilon)$, so we have

\begin{eqnarray*}\nonumber
\frac{\partial}{\partial r}Q_{g(r)}^{1,0}(\Sigma,\partial\Sigma)\Big|_{r=0}&=&\frac{d}{dr}\Big( vol(\Sigma,\tilde{g}(r))^{\frac{2-n}{n}}\Big[\int_{\Sigma}R_{\tilde{g}(r)}d\sigma+
2\int_{\partial\Sigma}\kappa_{\tilde{g}(r)}d\sigma_{\partial\Sigma}\Big]\Big)\Big|_{r=0} \\
&=& vol(\Sigma)^{\frac{2-n}{n}}\Big(\frac{2-n}{n}vol(\Sigma)^{-1}\int_{\Sigma}\frac{1}{2}(tr_gh)d\sigma\int_{\Sigma}R_gd\sigma_g\Big)\nonumber\\
& +&vol(\Sigma)^{\frac{2-n}{n}}\Big(\int_{\Sigma}\langle -Ric^{\Sigma}+\frac{R_g}{2}g,h\rangle d\sigma+\int_{\Sigma}\Delta_{\Sigma}(tr_g(h))d\sigma \Big)\nonumber\\
&+ &vol(\Sigma)^{\frac{2-n}{n}}\int_{\partial\Sigma}\langle div(h),\nu\rangle,\nonumber\\
\end{eqnarray*}
where we use Stokes' Theorem and $\frac{\partial}{\partial r}\int_{\Sigma}d\sigma_r\Big|_{r=0}=\frac{1}{2}\int_{\Sigma}\langle h,g\rangle d\sigma$. Therefore,
we have

\begin{eqnarray}\nonumber
\frac{\partial}{\partial r}Q_{g(r)}^{1,0}(\Sigma,\partial\Sigma)\Big|_{r=0}&=& -vol(\Sigma)^{\frac{2-n}{n}} \int_{\Sigma}\langle Ric^{\Sigma}-\frac{1}{2}R_g g+\frac{n-2}{2n}\overline{R}g,h\rangle d\sigma\nonumber\\
& +&vol(\Sigma)^{\frac{2-n}{n}}\Big(\int_{\Sigma}\Delta_{\Sigma}(tr_g(h))d\sigma+\int_{\partial\Sigma}\langle div(h),\nu\rangle d\sigma_{\partial\Sigma}\Big), \nonumber
\end{eqnarray}
 where $\overline{R}$ denotes the average scalar curvature $\overline{R}=vol(\Sigma)^{-1}\int_{\Sigma}R_gd\sigma$.

Thanks to identity (\ref{gauss}), the scalar curvature of $\Sigma$ is constant with respect to the induced metric, so we obtain
$$-\int_{\Sigma}\langle Ric^{\Sigma}-\frac{1}{n}R_g g,h\rangle d\sigma+\int_{\partial\Sigma}\langle div(h),\nu\rangle d\sigma_{\partial\Sigma}+\int_{\Sigma}\Delta_{\Sigma}(tr_g(h))d\sigma=0.$$

Choosing $h$ as the traceless Ricci tensor, we derive the following expression
$$-\int_{\Sigma} \|Ric^{\Sigma}-\frac{1}{n}R_g g\|^2d\sigma+\frac{n-2}{2n}\int_{\partial\Sigma}\langle \nabla R, \nu\rangle d\sigma_{\partial\Sigma}=0.$$

Since $\int_{\partial\Sigma}\langle \nabla R, \nu\rangle d\sigma_{\partial\Sigma}=\int_{\Sigma}\Delta_{\Sigma} R d\sigma=0,$ we deduce that the traceless Ricci tensor must vanish implying that $\Sigma$ carries an Einstein metric.
\end{proof}

\begin{proposition}\label{propig2}
 Under the considerations of item II) in Theorem \ref{princ}, we have that $\|h^{\Sigma}\|^2=0$, $R^M=0$ and $Ric(N,N)=0$ along $\Sigma$, while
 $H^{\partial M}=0$ and $\Pi(N,N)=0$ along $\partial\Sigma$,
the mean curvature of $\partial\Sigma$ in $\Sigma$ is equal to zero. Moreover,  $\sigma^{1,0}(\Sigma,\partial\Sigma)=0$ and $\Sigma$ is Ricci flat with respect to the induced metric.
\end{proposition}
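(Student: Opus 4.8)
The plan is to run the argument of Proposition~\ref{propig} verbatim, but with the sign hypotheses of item~II) replacing those of item~I); the essential difference is that here the relevant Yamabe constant equals zero rather than being negative. First I would use that, being locally volume-minimizing, $\Sigma$ is stable and minimal, so $H=0$ and the stability inequality~(\ref{mud2}) holds. Substituting (\ref{free}) and enlarging $2\|\nabla\varphi\|_g^2$ to $a_n\|\nabla\varphi\|_g^2$ via $a_n>2$, exactly as in the passage to (\ref{desi}), yields for every smooth $\varphi>0$
\[
\int_{\Sigma}\big(a_n\|\nabla\varphi\|_g^2+R_g\varphi^2\big)\,d\sigma+2\int_{\partial\Sigma}\kappa_g\varphi^2\,d\sigma_{\partial\Sigma}\geq \int_{\Sigma}(R^M+\|h^{\Sigma}\|^2)\varphi^2\,d\sigma+2\int_{\partial\Sigma}H^{\partial M}\varphi^2\,d\sigma_{\partial\Sigma}.
\]
Since $R^M\geq0$, $\|h^{\Sigma}\|^2\geq0$ and $H^{\partial M}\geq0$ (mean convexity), the right-hand side is nonnegative, so the numerator of $Q_g^{1,0}(\varphi)$ is nonnegative; dividing by the positive denominator and taking the infimum gives $Q_g^{1,0}(\Sigma,\partial\Sigma)\geq0$. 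Combined with $\sigma^{1,0}(\Sigma,\partial\Sigma)\geq Q_g^{1,0}(\Sigma,\partial\Sigma)$ and the hypothesis $\sigma^{1,0}(\Sigma,\partial\Sigma)\leq0$, this forces $\sigma^{1,0}(\Sigma,\partial\Sigma)=Q_g^{1,0}(\Sigma,\partial\Sigma)=0$.

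Next I would extract the pointwise conclusions. Since $0=Q_g^{1,0}(\Sigma,\partial\Sigma)<Q_g^{1,0}(\mathbb{S}^{n-1}_+,\partial\mathbb{S}^{n-1}_+)$, the resolution of the Yamabe problem recalled in Section~\ref{prelim} furnishes a minimizer $\varphi_{min}>0$ with vanishing numerator. Evaluating the displayed inequality at $\varphi_{min}$ makes its left-hand side zero, while the right-hand side is a sum of nonnegative terms; each must therefore vanish, and as $\varphi_{min}>0$ this gives $\|h^{\Sigma}\|^2=0$ (so $\Sigma$ is totally geodesic), $R^M=0$ along $\Sigma$, and $H^{\partial M}=0$ along $\partial\Sigma$. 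The strict inequality $a_n>2$ used above then forces $\|\nabla\varphi_{min}\|_g=0$, so $\varphi_{min}$ is constant, and equality throughout gives $\delta^2\Sigma(\varphi_{min},\varphi_{min})=0$. Feeding this constant first eigenfunction into the Robin problem~(\ref{rob}), as in Proposition~\ref{propig}, shows $\lambda_1=0$, whence $Ric(N,N)=0$ along $\Sigma$ and $\Pi(N,N)=0$ along $\partial\Sigma$; then (\ref{free}) gives $\kappa_g=H^{\partial M}-\Pi(N,N)=0$, i.e. $\partial\Sigma$ is minimal in $\Sigma$. Finally the Gauss equation~(\ref{gauss}) with $H=0$, $R^M=0$, $\|h^{\Sigma}\|^2=0$ reads $Ric(N,N)=-\tfrac12 R_g$, so $R_g=0$.

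It remains to upgrade $R_g=0$ to Ricci flatness, and I expect the real work to lie here. The plan is to repeat the conformal-deformation computation of Proposition~\ref{propig}: deform by $g(r)=g+rh$, solve the Yamabe problem to obtain $\tilde g(r)=u_r^{4/(n-3)}g(r)$ with constant scalar curvature $Q_{g(r)}^{1,0}(\Sigma,\partial\Sigma)$ and minimal boundary, and exploit $Q_{g(r)}^{1,0}(\Sigma,\partial\Sigma)\leq\sigma^{1,0}(\Sigma,\partial\Sigma)=0=Q_g^{1,0}(\Sigma,\partial\Sigma)$ to conclude that $r=0$ is a maximum, hence $\frac{\partial}{\partial r}Q_{g(r)}^{1,0}(\Sigma,\partial\Sigma)\big|_{r=0}=0$. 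Differentiating the scalar-curvature functional and choosing $h$ to be the traceless Ricci tensor then yields, because $R_g$ is constant, an identity equating $\int_{\Sigma}\|Ric^{\Sigma}-\tfrac{R_g}{n-1}g\|^2\,d\sigma$ to a boundary term that vanishes since $\int_{\partial\Sigma}\langle\nabla R,\nu\rangle\,d\sigma_{\partial\Sigma}=\int_{\Sigma}\Delta_{\Sigma}R\,d\sigma=0$; thus the traceless Ricci tensor vanishes, and together with $R_g=0$ this gives $Ric^{\Sigma}=0$.

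The one delicate point, and the main obstacle relative to Proposition~\ref{propig}, is the uniqueness $\tilde g(0)=g$ (up to scaling) needed to evaluate the derivative: in the negative case it followed from uniqueness of the negative-constant-scalar-curvature representative, whereas here $Q_g^{1,0}(\Sigma,\partial\Sigma)=0$, where uniqueness is more subtle. However, once $R_g=0$ and $\kappa_g=0$ are in hand, $g$ is itself a scalar-flat metric with minimal boundary, and any conformal factor $u_0$ producing another such representative must satisfy $\Delta_g u_0=0$ with $\partial u_0/\partial\nu=0$, forcing $u_0$ constant. This restores the uniqueness and legitimizes the derivative computation, completing the argument.
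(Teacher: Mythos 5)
Your proposal is correct and follows essentially the same route as the paper: the paper's own proof of Proposition~\ref{propig2} just observes that the inequalities from Theorem~\ref{estimate} become equalities (forcing $\sigma^{1,0}(\Sigma,\partial\Sigma)=0$), then repeats the steps of Proposition~\ref{propig}, and finally concludes Ricci flatness from the Einstein property together with the Gauss equation~(\ref{gauss}), exactly as you do. Your additional observation that uniqueness of the conformal representative in the zero-Yamabe-constant case reduces to a harmonic function with Neumann condition (hence constant) correctly fills in a detail the paper leaves implicit in the phrase ``follows in the same steps.''
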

\begin{proof}
Arguing as in the proof of Theorem \ref{estimate}, we have $$
0=\inf R^Mvol(\Sigma)^\frac{2}{n-1}\leq\sigma^{1,0}(\Sigma,\partial\Sigma)\leq0,
$$
then the above inequalities become equalities, hence the proof of Proposition \ref{propig2} follows in the same steps like that one of Proposition \ref{propig}.
Moreover, by using that the induced metric on $\Sigma$ is Einstein as well as (\ref{gauss}) we deduce that $\Sigma$ is Ricci flat.
\end{proof}

To conclude this section we recall that a two-sided properly embedded free boundary hypersurface $\Sigma$ in
$M$ is called by \textit{infinitesimally rigid}, if
$\Sigma$ is totally geodesic,
$R^M=\inf R^M$ and $Ric(N,N)$ vanishes along $\Sigma$,
the mean curvature of $\partial M$ is constant equal to $\inf H^{\partial M}$ at every point of $\partial\Sigma$
and the induced metric on $\Sigma$ is Einstein (i.e.,
the induced metric on $\Sigma$ attains the Yamabe invariant). We also remark that basic examples of such
manifolds are horizontal slices $\{r\}\times\Sigma$ in
a Riemannian manifold $\mathbb{R}\times \Sigma$ endowed with the product metric,
where $\Sigma$ is an Einstein manifold with constant scalar curvature and boundary being a hypersurface with constant mean curvature.

\subsection{Local foliation by CMC free boundary hypersurfaces}

When $\Sigma$ is infinitesimally rigid allows us to use the Implicit Function Theorem to obtain a foliation in
a neighborhood of $\Sigma$ by constant mean curvature free boundary hypersurfaces. This is contained in the next proposition
that was inspired by the work of Bray, Brendle and Neves \cite{BBN} whose proof is a slight modification of
that one presented in Ambrozio \cite{A} or in Nunes \cite{N}.

Considering a properly embedded infinitesimally rigid hypersurface $\Sigma$ in $M$, we obtain a vector
field $Y$ in $M$ that coincides with $N$ in $\Sigma$ and $Y(p)$ is tangent to $\partial M $ for all $p\in\partial M$.
Let $\psi =\psi(t, x)$ denote the flow of $Y$.

\begin{proposition}[CMC Foliation]\label{folhea}
Let $M^n$ be a Riemannian manifold with nonempty boundary. Assume that $M$ contains a properly embedded
free boundary hypersurface $\Sigma$ such that $H^{\partial M}$ and $R^M$ are bounded from below. If $\Sigma$ is infinitesimally rigid, then there exist
$\varepsilon>0$ and a smooth function $\mu:(-\varepsilon,\varepsilon)\times\Sigma\rightarrow\mathbb{R}$ such that $\Sigma_{t}:=\{\psi(\mu(t,x)+t,x),x\in\Sigma\}$
is a family of compact free boundary hypersurfaces with constant mean curvature. In addition $\mu(0,x)=0$, $\frac{\partial \mu}{\partial t} (0,x)=0$
and $\int_{\Sigma}\mu(t,\cdot)d\sigma=0$ for each $x\in\Sigma$ and $t\in(-\varepsilon,\varepsilon).$
\end{proposition}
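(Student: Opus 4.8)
The plan is to produce the leaves $\Sigma_t$ as normal graphs over $\Sigma$ written through the flow $\psi$ of the vector field $Y$, and to solve the constant–mean–curvature and free boundary equations simultaneously by the Implicit Function Theorem in Hölder spaces. For $u\in C^{2,\alpha}(\Sigma)$ small, set $\Sigma^u=\{\psi(u(x),x):x\in\Sigma\}$; since $Y$ is tangent to $\partial M$ along $\partial M$, we have $\partial\Sigma^u\subset\partial M$, so $\Sigma^u$ is automatically properly embedded. Let $H(u)\in C^{0,\alpha}(\Sigma)$ denote its mean curvature read back on $\Sigma$ via the graph, and let $\Theta(u)\in C^{1,\alpha}(\partial\Sigma)$ be the function measuring the deviation of $\Sigma^u$ from meeting $\partial M$ orthogonally, so that $\Sigma^u$ is free boundary exactly when $\Theta(u)=0$. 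With $X=\{w\in C^{2,\alpha}(\Sigma):\int_\Sigma w\,d\sigma=0\}$ and $\mathcal{Y}=\{f\in C^{0,\alpha}(\Sigma):\int_\Sigma f\,d\sigma=0\}\times C^{1,\alpha}(\partial\Sigma)$, I would define
\[
\Phi:(-\delta,\delta)\times X\longrightarrow \mathcal{Y},\qquad \Phi(t,w)=\Big(H(t+w)-\tfrac{1}{vol(\Sigma)}\!\int_\Sigma H(t+w)\,d\sigma,\ \Theta(t+w)\Big).
\]
Because $\Sigma$ is infinitesimally rigid it is totally geodesic, hence minimal ($H(0)=0$), and it is free boundary by hypothesis, so $\Phi(0,0)=(0,0)$.

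The heart of the argument is the linearization $D_w\Phi(0,0)$. Writing $\Phi(t,w)=\Psi(t+w)$, a standard first–variation computation gives $D\Psi(0)[v]=\big(\mathcal{L}v-\tfrac{1}{vol(\Sigma)}\int_\Sigma\mathcal{L}v\,d\sigma,\ \tfrac{\partial v}{\partial\nu}-\Pi(N,N)v\big)$, where $\mathcal{L}=\Delta_\Sigma+Ric(N,N)+\|h^\Sigma\|^2$ is the Jacobi operator and the boundary term is exactly the one appearing in (\ref{secvar}). This is where rigidity enters decisively: since $\Sigma$ is totally geodesic ($\|h^\Sigma\|^2=0$) and $Ric(N,N)=0$ along $\Sigma$, we have $\mathcal{L}=\Delta_\Sigma$; and since $\Pi(N,N)=0$ along $\partial\Sigma$, the boundary operator collapses to the Neumann trace $\partial v/\partial\nu$. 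Thus $D_w\Phi(0,0)[v]=\big(\Delta_\Sigma v-\tfrac{1}{vol(\Sigma)}\int_\Sigma\Delta_\Sigma v\,d\sigma,\ \partial v/\partial\nu\big)$ is, up to averaging, the Neumann Laplacian.

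I would then check that $D_w\Phi(0,0):X\to\mathcal{Y}$ is an isomorphism. Given $(f,g)\in\mathcal{Y}$, the problem amounts to $\Delta_\Sigma v=f+\tfrac{1}{vol(\Sigma)}\int_{\partial\Sigma}g\,d\sigma_{\partial\Sigma}$ with $\partial v/\partial\nu=g$, whose divergence-theorem solvability condition is precisely $\int_\Sigma f=0$, built into $\mathcal{Y}$; elliptic Schauder theory for the Neumann problem then yields a solution, unique once normalized by $\int_\Sigma v\,d\sigma=0$, i.e. $v\in X$. Hence the Implicit Function Theorem provides $\varepsilon>0$ and a smooth curve $t\mapsto w_t\in X$ with $w_0=0$ and $\Phi(t,w_t)=0$ for $|t|<\varepsilon$: vanishing of the first component forces $H(t+w_t)$ to equal the constant $\tfrac{1}{vol(\Sigma)}\int_\Sigma H(t+w_t)\,d\sigma$, while vanishing of the second makes $\Sigma^{t+w_t}$ free boundary. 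Setting $\mu(t,x)=w_t(x)$ gives $\Sigma_t=\{\psi(\mu(t,x)+t,x)\}$ with $\mu(0,\cdot)=0$ and $\int_\Sigma\mu(t,\cdot)\,d\sigma=0$ by construction.

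Finally, for $\partial\mu/\partial t(0,\cdot)=0$, differentiating $\Phi(t,w_t)=0$ at $t=0$ gives $D_w\Phi(0,0)[\dot w_0]=-D_t\Phi(0,0)$. Since $\Phi$ depends on $(t,w)$ only through $t+w$, we have $D_t\Phi(0,0)=D\Psi(0)[1]$, and the constant function $1$ lies in the kernel computed above (because $\Delta_\Sigma 1=0$ and $\partial 1/\partial\nu=0$), so $D_t\Phi(0,0)=0$; injectivity of $D_w\Phi(0,0)$ then yields $\dot w_0=0$, that is $\partial\mu/\partial t(0,\cdot)=0$. I expect the main obstacle to be the precise linearization of the orthogonality operator $\Theta$ at $u=0$ — confirming that it reduces to $\partial v/\partial\nu-\Pi(N,N)v$ — together with the verification that the averaged Neumann problem is an isomorphism in the chosen Hölder spaces; once these are in place, infinitesimal rigidity collapses the linearization to the Neumann Laplacian and the whole scheme runs automatically.
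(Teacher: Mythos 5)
Your proposal is correct and follows essentially the same route as the paper's proof: the same map $\Phi(t,u)=\big(H(t+u)-\frac{1}{vol(\Sigma)}\int_\Sigma H(t+u)\,d\sigma,\ \Theta(t+u)\big)$ on the same zero-average H\"older spaces, the same collapse of the linearization to the averaged Neumann Laplacian via infinitesimal rigidity, the same solvability check for the Neumann problem, and the Implicit Function Theorem. Your derivation of $\partial\mu/\partial t(0,\cdot)=0$ (kernel of $D\Psi(0)$ contains constants, injectivity on the zero-average subspace) is a slightly streamlined version of the paper's argument, which instead notes that $\partial\mu/\partial t(0,\cdot)$ solves the homogeneous Neumann problem, hence is a constant, and is then killed by the zero-average normalization.
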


\begin{proof}
A CMC foliation can be constructed as in \cite{A}. Let $E_n=\{u\in C^{n,\alpha}(\Sigma); \int_{\Sigma}u=0\}$
be a Banach spaces with H\"older exponent $\alpha\in(0,1)$ for each $n\in \mathbb{N}$. Choose $\tau>0$, $\delta>0$ and a real function $u$ in the
open ball $B_{\delta}(0)=\{u\in C^{2,\alpha}(\Sigma); \|u\|_{{2,\alpha}}<\delta\}$ such that the set
$\Sigma_{u+t}=\{\psi(u(x)+t,t);x\in\Sigma\}$ defines a compact properly embedded hypersurface for all $(t,u)\in(-\tau,\tau)\times B_{\delta}(0)$.

Let $Z$ be the unit normal vector field of $\partial M$ that coincides
with the exterior conormal $\nu$ of $\partial \Sigma$. We define a mapping
$\Phi:(-\tau,\tau)\times (B_\delta(0)\cap E_2)\rightarrow E_0\times C^{1,\alpha}(\partial\Sigma)$ putting
$$\Phi(t,u)=(H(t+u)-\frac{1}{vol(\Sigma)}\int_{\Sigma}H(t+u)d\sigma,\langle N_{t+u},Z_{t+u}\rangle),$$
where $N_u$ denotes the unit normal field of $\Sigma_u,\, Z_u=Z\Big|_{\partial \Sigma_u}$ and $ H(u)$ is the mean curvature of $\Sigma_u.$
Note that $\Phi$ is well-defined and $\Phi(0,0)=(0,0)$ provided $\Sigma_0=\Sigma$ is minimal and free boundary.

Consider the mapping $f:(-\tau,\tau)\times\Sigma\rightarrow M$ so that $f(t,\cdot)=\psi(tv(\cdot),\cdot)$ which gives a variation for
each $v\in E_2$, whose variational vector field is $\frac{\partial}{\partial t}f\Big|_{t=0}=vY$ on $\Sigma$.

We compute $D\Phi_{(0,0)}(0, v)$ for each $v\in E_2$

\begin{eqnarray}\nonumber
D\Phi_{(0,0)}(0, v)&=&\frac{d\Phi}{ds}\Big|_{t=0}(0,sv)\\
&=&(-\Delta_{\Sigma}v+\frac{1}{vol(\Sigma)}\int_{\partial\Sigma}\frac{\partial v}{\partial\nu}d\sigma_{\partial\Sigma},-\frac{\partial v}{\partial\nu})\nonumber,
\end{eqnarray}
where we used that $\Sigma$ is infinitesimally rigid.

Now, choosing $w\in E_0$ and $z\in C^{1,\alpha}(\partial\Sigma)$ we deduce $$
\int_{\Sigma}\Big(w+\frac{1}{vol(\Sigma)}\int_{\partial\Sigma}zd\sigma_{\partial\Sigma}\Big)d\sigma=
\int_{\partial\Sigma}zd\sigma_{\partial\Sigma},
$$
 which implies by Theorem 2.1 of \cite{Na} that there exists a unique function $\theta\in E_2$ solving the Neumann boundary problem

\begin{equation}
\left\{\begin{matrix}
\Delta_{\Sigma}\theta = w+\frac{1}{vol(\Sigma)}\int_{\partial\Sigma}zd\sigma_{\partial\Sigma} & \textrm{in $\Sigma$}\\
\frac{\partial \theta}{\partial t}=-z & \textrm{on $\partial\Sigma$}
\end{matrix}\right..
\end{equation}
Hence, $D\Phi_{(0,0)}(0,\theta)=(w,z)$, so $D\Phi_{(0,0)}$ is an isomorphism
when restricted to $0\times E_2$ (see also \cite{L}, p. 137). Hence, we are in position to use the Implicit Function Theorem to
guarantee the existence of $\varepsilon>0$ as well as a smooth function $\mu$ such that $\mu(0,x)=0$ and $\mu(t,\cdot)\in B_\delta(0)\cap E_2$.
We can construct a variation $ G(t,x)=\psi(\mu(t,x)+t,x)$ whose velocity vector is equal to $\Big(\frac{\partial \mu}{\partial t}+1\Big)N$ on $\Sigma$.

Differentiating the following identity at $t=0$
$$(H(\mu(t,\cdot)+t)-\frac{1}{vol(\Sigma)}\int_{\Sigma}H(\mu(t,\cdot)+t)d\sigma,\langle N_{\mu(t,\cdot)+t},X_{\mu(t,\cdot)+t}\rangle)=(0,0),$$
we get that $\frac{\partial}{\partial t} \mu(0,x)$ is constant since it satisfies the homogeneous Neumann problem.
However, taking once more the derivative at $t=0$ of $\int_{\Sigma}\mu(t,\cdot)d\sigma=0$, we obtain
$$\int_{\Sigma}\frac{\partial \mu}{\partial t}(0,\cdot)d\sigma=0,$$
which implies $\frac{\partial \mu}{\partial t}(0,x)=0$.

We remark that $$\frac{\partial G}{\partial t}(0,x)=N\textrm{ \;\;\;\; for all $x\in\Sigma$,}$$
with $G(0,x)=x$. Thus, we can assume that, decreasing $\varepsilon$ if necessary, a neighborhood of $\Sigma$ is parametrized by $G$. Hence, the assertion follows and we complete the proof.

\end{proof}

We construct a foliation on a neighborhood of $\Sigma$ in $M$ by properly embedded free boundary
$\{\Sigma_t\}_{t\in(-\varepsilon,\varepsilon)}$. We consider the following mapping $\mathfrak{f}(t,\cdot):\Sigma\rightarrow M $ given by
$\mathfrak{f}(t,x) = \psi(\mu(t,x)+t,x)$ that parametrizes the foliation $\{\Sigma_t\}_{t\in(-\varepsilon,\varepsilon)}$ around
$\Sigma$ and denote by $d\sigma_t$ and $d\sigma_{\partial\Sigma_t}$
the volume element of $\Sigma_t$ and the area element  of $\partial \Sigma_t$ with respect to the induced metric by $\mathfrak{f}(t,\cdot)$, respectively.

Consider the operator $$
\mathcal{L}(t)=\Delta_{\Sigma_t} + Ric (N_t, N_t)+\|h^{\Sigma_t}\|^2,
$$
where $\Delta_{\Sigma_t}$, or just $\Delta_t$ when there is no ambiguity, stands for the Laplacian of $\Sigma_t$ in the induced metric,
$N_t$ is the unit normal vector field of $\Sigma_t$ which we assume that depends smoothly on $(-\varepsilon,\varepsilon)\times\Sigma$.
Moreover, $h^{\Sigma_t}$ denotes the second fundamental form of $\mathfrak{f}(t,\cdot)$ with respect to $N_t$.

For each $t\in(-\varepsilon,\varepsilon)$ the \textit{lapse function} $\ell_t:\Sigma\rightarrow \mathbb{R}$
is defined by
$$
\ell_t(x) = \big \langle N_t(x),X_t(x)\big\rangle,
$$
where $X_t=\frac{\partial}{\partial t}\mathfrak{f}(t,\cdot)$.
The next lemma is fundamental and its proof can be found in \cite[Proposition 18]{A}.

\begin{lemma} \label{rambr}
Let $\Sigma_t\subset M$, $t\in(-\varepsilon,\varepsilon)$, be a family of hypersurfaces of constant mean curvature free boundary hypersurface.
The lapse function $\ell_t (x)$ satisfies
\begin{eqnarray}\label{Huiskboun}
H'(t)&=&-\mathcal{L}(t)\ell_t\textrm{\;\;in $\Sigma_t$}\\
\frac{\partial \ell_t}{\partial\nu_t}&=&\Pi(N_t,N_t)\ell_t\textrm{\;\;on $\partial\Sigma_t$} ,
\end{eqnarray}
 where $H(t)$ is the mean curvature of $\Sigma_t$ and $H'=\frac{\partial}{\partial t}H$.
\end{lemma}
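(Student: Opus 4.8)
The plan is to establish the two identities separately. The interior equation $H'(t)=-\mathcal{L}(t)\ell_t$ is simply the linearisation of the mean curvature operator restricted to its normal direction, specialised to the fact that each leaf is CMC; the boundary equation is obtained by differentiating in $t$ the orthogonality (free boundary) condition $\langle N_t,Z\rangle=0$ along $\partial\Sigma_t$. Throughout I write $\overline{\nabla}$ for the ambient Levi-Civita connection of $M$, and I decompose the velocity field $X_t=\frac{\partial}{\partial t}\mathfrak{f}(t,\cdot)$ as $X_t=\ell_t N_t+X_t^{\top}$, with $X_t^{\top}$ tangent to $\Sigma_t$. I keep the sign conventions $\Pi(U,V)=\langle\overline{\nabla}_U Z,V\rangle$ for $U,V$ tangent to $\partial M$ and $h^{\Sigma_t}(U,V)=\langle\overline{\nabla}_U V,N_t\rangle$ for $U,V$ tangent to $\Sigma_t$.

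For the interior identity I would begin from the classical first variation formula for the mean curvature along a (not necessarily normal) variation,
\begin{equation*}
\frac{\partial H}{\partial t}=-\Delta_{\Sigma_t}\ell_t-\big(\mathrm{Ric}(N_t,N_t)+\|h^{\Sigma_t}\|^2\big)\ell_t+\langle\nabla^{\Sigma_t}H,X_t^{\top}\rangle,
\end{equation*}
which one derives by differentiating $H=\mathrm{div}_{\Sigma_t}N_t$ and using $[\partial_t,\partial_{x^j}]=0$ together with the Gauss and Codazzi equations. Since each $\Sigma_t$ has constant mean curvature, $H(t)$ is spatially constant on each leaf, so $\nabla^{\Sigma_t}H=0$ and the tangential term drops, leaving exactly $H'(t)=-\mathcal{L}(t)\ell_t$ with $\mathcal{L}(t)=\Delta_{\Sigma_t}+\mathrm{Ric}(N_t,N_t)+\|h^{\Sigma_t}\|^2$.

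For the boundary identity the starting point is the free boundary condition: along $\partial\Sigma_t$ the surface meets $\partial M$ orthogonally, so $N_t$ is tangent to $\partial M$ and the outward conormal $\nu_t$ coincides with $Z$, while the variation keeps $\partial\Sigma_t$ inside $\partial M$, so $X_t$ is tangent to $\partial M$ and hence $X_t^{\top}\in T\partial\Sigma_t$ there. Differentiating $\langle N_t,Z\rangle=0$ in $t$ gives
\begin{equation*}
\langle\overline{\nabla}_{X_t}N_t,Z\rangle+\langle N_t,\overline{\nabla}_{X_t}Z\rangle=0.
\end{equation*}
I would then insert the variation-of-normal formula $\overline{\nabla}_{X_t}N_t=-\nabla^{\Sigma_t}\ell_t-h^{\Sigma_t}(X_t^{\top},\cdot)^{\sharp}$ (proved by the same commutation argument), so that, using $Z=\nu_t$, the first term becomes $-\frac{\partial\ell_t}{\partial\nu_t}-h^{\Sigma_t}(X_t^{\top},\nu_t)$; and I would expand the second term through the decomposition of $X_t$ and the definition of $\Pi$, obtaining $\ell_t\,\Pi(N_t,N_t)+\Pi(X_t^{\top},N_t)$.

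Collecting terms yields
\begin{equation*}
\frac{\partial\ell_t}{\partial\nu_t}=\Pi(N_t,N_t)\ell_t+\Pi(X_t^{\top},N_t)-h^{\Sigma_t}(X_t^{\top},\nu_t).
\end{equation*}
The crucial observation — and the step I expect to be the main obstacle, since it rests on matching the sign conventions of the two distinct second fundamental forms — is that the last two terms cancel: because $\nu_t=Z$ along $\partial\Sigma_t$ and $X_t^{\top}$ is tangent to $\partial\Sigma_t$, both $\Pi(X_t^{\top},N_t)$ and $h^{\Sigma_t}(X_t^{\top},\nu_t)$ reduce to $\langle\overline{\nabla}_{X_t^{\top}}\nu_t,N_t\rangle$. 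Hence $\frac{\partial\ell_t}{\partial\nu_t}=\Pi(N_t,N_t)\ell_t$ on $\partial\Sigma_t$, which would complete the proof. The interior computation is routine once the first variation formula is in hand; the genuine care is needed on the boundary, in tracking the tangential component $X_t^{\top}$ and verifying this cancellation.
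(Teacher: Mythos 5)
Your proposal is correct: the interior identity via the standard linearization of the mean curvature (with the tangential term killed by the CMC condition), and the boundary identity via differentiating $\langle N_t,Z\rangle=0$ along $\partial M$ with the cancellation $\Pi(X_t^{\top},N_t)=h^{\Sigma_t}(X_t^{\top},\nu_t)$, both check out, and your sign convention for $\Pi$ is consistent with the one the paper uses in identity (\ref{free}). Note that the paper itself gives no proof but refers to \cite[Proposition 18]{A}, and your argument is essentially the same one found there.
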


\subsection{Volume Comparison and Rigidity}

In order to obtain the local rigidity we need the following proposition.

\begin{proposition} \label{proimp}
Under the considerations of Theorem \ref{princ}, if $\Sigma$ is infinitesimally rigid  we have
$$
vol(\Sigma)\geq vol(\Sigma_t)
,\forall \,t\in(-\varepsilon,\varepsilon),$$
where $\{\Sigma_t\}_{t\in(-\varepsilon,\varepsilon)}$ is given as in Proposition \ref{folhea}.
\end{proposition}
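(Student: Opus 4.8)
The plan is to show that the volume functional $A(t):=vol(\Sigma_t)$ attains a local maximum at $t=0$. First I would record the first variation of volume along the foliation produced in Proposition \ref{folhea}. Since each $\Sigma_t$ is free boundary, its variation vector field $X_t=\frac{\partial}{\partial t}\mathfrak{f}(t,\cdot)$ is tangent to $\partial M$ along $\partial\Sigma_t$, whereas the conormal $\nu_t$ agrees with the normal $Z$ of $\partial M$; hence $\langle X_t,\nu_t\rangle=0$ and the boundary term in (\ref{variaare}) vanishes. As $\Sigma_t$ has constant mean curvature $H(t)$, this gives
\[
A'(t)=\int_{\Sigma_t}H(t)\,\ell_t\,d\sigma_t=H(t)\int_{\Sigma_t}\ell_t\,d\sigma_t .
\]
Because $\frac{\partial\mu}{\partial t}(0,\cdot)=0$ we have $\ell_0\equiv1$, so $\int_{\Sigma_t}\ell_t\,d\sigma_t>0$ for $|t|$ small and $A'(0)=0$. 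It therefore suffices to prove $H(t)\le0$ for $t\ge0$ and $H(t)\ge0$ for $t\le0$; since $H(0)=0$, this follows once I show $H'(t)\le0$ for all small $t$.

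Next I would relate the sign of $H'(t)$ to an eigenvalue. By \lemref{rambr}, $\ell_t$ solves $-\mathcal{L}(t)\ell_t=H'(t)$ (a constant on $\Sigma_t$) together with the Robin condition $\frac{\partial \ell_t}{\partial\nu_t}=\Pi(N_t,N_t)\ell_t$. Let $\lambda_1(t)$ and $\phi_t>0$ be the first eigenvalue and eigenfunction of the problem (\ref{rob}) posed on $\Sigma_t$. Since $-\mathcal{L}(t)$ is self-adjoint under this boundary condition, pairing $\ell_t$ against $\phi_t$, integrating and using Green's identity (the boundary contributions cancel) yields $H'(t)\int_{\Sigma_t}\phi_t\,d\sigma_t=\lambda_1(t)\int_{\Sigma_t}\ell_t\,\phi_t\,d\sigma_t$. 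As both integrals are positive for small $t$, $H'(t)$ and $\lambda_1(t)$ have the same sign, so the goal reduces to showing $\lambda_1(t)\le0$.

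To estimate $\lambda_1(t)$ I would insert the Yamabe minimizer $\varphi_t$ of $(\Sigma_t,g_t)$ into the Rayleigh quotient, mimicking the proof of \thmref{estimate}; such a minimizer exists because $Q_{g_t}^{1,0}(\Sigma_t,\partial\Sigma_t)\le\sigma^{1,0}(\Sigma,\partial\Sigma)<0$. Substituting the Gauss equation (\ref{gauss}) and the free boundary identity (\ref{free}), dropping the manifestly nonpositive terms $-\frac{1}{2}\int_{\Sigma_t}(\|h^{\Sigma_t}\|^2+H(t)^2)\varphi_t^2$ and (by mean convexity) $-\int_{\partial\Sigma_t}H^{\partial M}\varphi_t^2$, and replacing $R^M$ by $\inf R^M$, the quadratic form is bounded above by $\int_{\Sigma_t}\|\nabla\varphi_t\|^2+\frac{1}{2}\int_{\Sigma_t}R_{g_t}\varphi_t^2+\int_{\partial\Sigma_t}\kappa_{g_t}\varphi_t^2-\frac{1}{2}\inf R^M\int_{\Sigma_t}\varphi_t^2$. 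Enlarging the gradient coefficient via $a_n=\frac{4(n-2)}{n-3}>2$ and then applying Hölder's inequality exactly as in Section \ref{upper}, I obtain
\[
\lambda_1(t)\le \frac{D(\varphi_t)}{2\int_{\Sigma_t}\varphi_t^2\,d\sigma_t}\Big(Q_{g_t}^{1,0}(\Sigma_t,\partial\Sigma_t)-\inf R^M\,vol(\Sigma_t)^{\frac{2}{n-1}}\Big),
\]
where $D(\varphi_t)=\big(\int_{\Sigma_t}\varphi_t^{2(n-1)/(n-3)}d\sigma_t\big)^{(n-3)/(n-1)}>0$. Hence $\lambda_1(t)\le0$ as soon as $Q_{g_t}^{1,0}(\Sigma_t,\partial\Sigma_t)\le\inf R^M\,vol(\Sigma_t)^{\frac{2}{n-1}}$.

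Finally I would close the loop. Since $\Sigma_t$ is diffeomorphic to $\Sigma$ the Yamabe invariant is unchanged, so $Q_{g_t}^{1,0}(\Sigma_t,\partial\Sigma_t)\le\sigma^{1,0}(\Sigma,\partial\Sigma)$, and infinitesimal rigidity (Proposition \ref{propig}) together with equality in (\ref{ineq}) gives $\sigma^{1,0}(\Sigma,\partial\Sigma)=\inf R^M\,vol(\Sigma)^{\frac{2}{n-1}}$. Thus $Q_{g_t}^{1,0}(\Sigma_t,\partial\Sigma_t)\le\inf R^M\,vol(\Sigma)^{\frac{2}{n-1}}$, and since $\inf R^M<0$ the inequality needed above holds exactly when $vol(\Sigma_t)\le vol(\Sigma)$ — which is precisely the conclusion we are after. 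I expect this circularity to be the main obstacle. I would resolve it by a maximal-interval argument: on the largest interval $[0,T]$ on which $A\le A(0)$ the estimate forces $H'\le0$, hence $H\le0$ and $A'\le0$, so $A$ is nonincreasing and stays $\le A(0)$; the only delicate point is the borderline $A(T)=A(0)$, where $A\equiv A(0)$ makes every intermediate slice a volume-minimizing, hence stable minimal, free boundary hypersurface, so infinitesimal rigidity can be re-invoked to extend the foliation past $T$. The symmetric argument treats $t\le0$, yielding $vol(\Sigma)\ge vol(\Sigma_t)$ for all $t\in(-\varepsilon,\varepsilon)$.
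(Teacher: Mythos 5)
Your derivation of the key inequality is sound and runs parallel to the paper's own: the first variation identity $A'(t)=H(t)\int_{\Sigma_t}\ell_t\,d\sigma_t$, the reduction to the sign of $H'(t)$, and the bound $\lambda_1(t)\le \frac{D(\varphi_t)}{2\int_{\Sigma_t}\varphi_t^2\,d\sigma_t}\big(Q^{1,0}_{g_t}(\Sigma_t,\partial\Sigma_t)-\inf R^M\, vol(\Sigma_t)^{\frac{2}{n-1}}\big)$, obtained by testing the Robin--Rayleigh quotient with the Yamabe minimizer, is a legitimate (and arguably cleaner) substitute for the paper's computation, in which the lapse equation (\ref{eq1}) is multiplied by $u_t^2$, integrated, and the cross term is absorbed by Cauchy's inequality with $\epsilon(t)=\ell_t/u_t$, leading to (\ref{eq5}). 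Up to that point the two arguments carry the same information, and your Case-2-type situation ($\inf R^M=0$) follows at once. The problem is the last step.

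The gap is in your resolution of the circularity, which you correctly identify as the crux: the maximal-interval argument fails at the openness step, and it fails in the only case that ever occurs. Since $\Sigma$ is locally volume-minimizing, every leaf satisfies $A(t)\ge A(0)$; hence on your maximal interval $[0,T]$ you always land in the ``borderline'' case $A\equiv A(0)$ --- it is not a delicate exception but the generic situation. There, your proposed fix (each slice is volume-minimizing, hence infinitesimally rigid, so ``the foliation can be extended past $T$'') produces nothing: the foliation already exists past $T$, since Proposition \ref{folhea} provides it on all of $(-\varepsilon,\varepsilon)$. What is missing is the inequality $A(t)\le A(0)$, equivalently $H'(t)\le0$, for $t$ slightly larger than $T$; and infinitesimal rigidity of $\Sigma_T$ merely places you in exactly the situation you faced at $t=0$, with the same circular dependence ($H'\le0$ requires $vol(\Sigma_t)\le vol(\Sigma_T)$, which requires $H'\le0$). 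So the connectedness argument never closes. The paper breaks the circle quantitatively rather than qualitatively: with uniform constants $K_0,K_1$ (available by continuity because $u_0\equiv\ell_0\equiv1$), inequality (\ref{eq5}) together with the first variation (\ref{RV}) is turned into the integro-differential inequality (\ref{respct}), $H'(t)\le -\frac{\inf R^M}{(n-1)K_1}\int_0^t H(s)\xi(s)\,ds$, and a Mean Value Theorem bootstrap then shows that $H(t_0)>0$ is impossible once $\varepsilon^2<-\frac{(n-1)K_0}{\inf R^M}$. Note that your identity $H'(t)\int_{\Sigma_t}\phi_t\,d\sigma_t=\lambda_1(t)\int_{\Sigma_t}\ell_t\phi_t\,d\sigma_t$ retains the quantitative control needed to run this same bootstrap, since the positive factors are uniformly bounded for small $t$; so your framework can be completed along the paper's lines, but as written the maximal-interval step is a genuine gap, not a technicality.
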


\begin{proof}

Locally each $\Sigma_t$ is free boundary
with constant mean curvature which implies that the first variation formula of volume reduces to
\begin{equation}\label{RV}
\frac{d}{dt}vol (\Sigma_t)= H(t)\int_{\Sigma_t}\langle N_t,X_t\rangle d\sigma_t,
\end{equation}
for all $t\in[0,\varepsilon).$ Notice that $X_0(x)= N(x)$, so the continuity implies that, decreasing $\varepsilon$ if necessary,
$\ell_t>0$ for all $t\in(-\varepsilon,\varepsilon)$.
 If  $H(t)\leq0$ for
$t\in[0,\varepsilon)$ and $H(t)\geq0$ for $t\in(-\varepsilon,0],$ then $\frac{d}{dt}vol (\Sigma_t)\leqslant0\;\;\forall t\in[0,\varepsilon)$
 and $\frac{d}{dt}vol (\Sigma_t)\geqslant0\;\;\forall t\in[-\varepsilon,0)$. This is sufficient to settle the result. Let us show that this occurs.
 Note that using once more (\ref{gauss}) we can rewrite (\ref{Huiskboun}) as
\begin{equation}\label{eq1}
2H'(t)(\ell_t)^{-1}=-2(\ell_t)^{-1}\Delta_t\ell_t+ R_t-R^M_t-H(t)^2-\|h^{\Sigma_t}\|^2.
\end{equation}

Let $g_t$ be the induced metric on $\Sigma$. By resolution of Yamabe's problem for manifolds with boundary, there exists for each $t\in(-\varepsilon,\varepsilon)$, a metric $\widetilde{g}_t$ in the conformal class of $g_t$ having scalar curvature $\inf R^M$ and the boundary being
a minimal hypersurface. Let $u_t$ be a positive function on $\Sigma_t$ satisfying $\widetilde{g}_t=u_t^{\frac{4}{n-3}}g_t$.

Now, we will adapt the method introduced in \cite{M} to establish the volume comparison. 
First multiplying (\ref{eq1}) by $u_t^2$ and integrating along $\Sigma_t$ it becomes

\begin{equation}\label{eq2}
2\int_{\Sigma}H'(t)\frac{u_t^2}{\ell_t}d\sigma_t\leq-2\int_{\Sigma}\frac{u_t^2}{\ell_t}\Delta_t\ell_td\sigma_t+ \int_{\Sigma}R_tu_t^2d\sigma_t-\inf R^M\int_{\Sigma}u^2_td\sigma_t.
\end{equation}

By using in the left term that $\Sigma_t$ has constant mean curvature and integration by parts on the right we obtain

\begin{eqnarray}\label{eq3}
2H'(t)\int_{\Sigma}\frac{u_t^2}{\ell_t}d\sigma_t&\leq& 2\int_{\Sigma}\Big(2\frac{u_t}{\ell_t}\langle \nabla_tu_t,\nabla_t\ell_t\rangle_{g_t}
-\frac{u_t^2}{\ell_t^2}\|\nabla_t\ell_t\|_{g_t}^2\Big)d\sigma_t\nonumber\\
&-&2\int_{\partial\Sigma}\Pi(N_t,N_t)u_t^2d\sigma_{\partial\Sigma_t}+\int_{\Sigma}R_tu_t^2d\sigma_t -\inf R^M\int_{\Sigma}u^2_td\sigma_t.\nonumber\\
\nonumber
\end{eqnarray}

The Cauchy inequality with epsilon shows that

$$
2\langle\nabla_tu_t,\nabla_t\ell_t\rangle_{g_t}\leq\|\nabla_t u_t\|_{g_t}^2\epsilon(t)+\|\nabla_t\ell_t\|_{g_t}^2\frac{1}{\epsilon(t)},
$$
where $\epsilon(t)=\dfrac{\ell_t}{u_t}$.

Finally

\begin{eqnarray}\label{eqi3}
2H'(t)\int_{\Sigma}\frac{u_t^2}{\ell_t}d\sigma_t &\leq& \int_{\Sigma}(a_n\|\nabla_tu_t\|_{g_t}^2+R_tu_t^2)d\sigma_t-2\int_{\partial\Sigma}H_t^{\partial M}u_t^2d\sigma_{\partial\Sigma_t}\nonumber\\
& &+2\int_{\partial\Sigma}\kappa_tu_t^2d\sigma_{\partial\Sigma_t}-\inf R^M\int_{\Sigma}u^2_td\sigma_t,
\end{eqnarray}
where we used that $a_n>2$ for all $n\geq4$ and identity (\ref{free}).

Dividing (\ref{eqi3}) by  $(\int_{\Sigma}u_t^{\frac{2(n-1)}{n-3}}d\sigma)^{\frac{n-3}{n-1}}$, using that $\inf H^{\partial M}=0$
and setting $\Psi(t)=(\int_{\Sigma}u_t^{\frac{2(n-1)}{n-3}}d\sigma)^{\frac{3-n}{n-1}}\int_{\Sigma}\frac{u_t^2}{\ell_t}d\sigma_t$, we arrive at

\begin{eqnarray}\nonumber
2H'(t)\Psi(t)&\leq&\frac{\int_{\Sigma}\big(a_n\|\nabla_t u_t\|_{g_{t}}^2+R_t u_t^2\big)d\sigma_t +2\int_{\partial\Sigma}\kappa_tu_t^2d\sigma_{\partial\Sigma_t}}{(\int_{\Sigma}u_t^{\frac{2(n-1)}{n-3}}d\sigma)^{\frac{n-3}{n-1}}}\\
&-&\inf R^M\frac{\int_{\Sigma}u^2_td\sigma_t}{(\int_{\Sigma}u_t^{\frac{2(n-1)}{n-3}}d\sigma)^{\frac{3-n}{n-1}}}\nonumber.
\end{eqnarray}

Next we distinguish two cases:
\begin{case}
 $\inf R^M<0$ and $\sigma^{1,0}(\Sigma,\partial\Sigma)<0.$
\end{case}

It follows from the definition of the Yamabe constant (\ref{yamainv}) and H\"older's inequality that
\begin{eqnarray}\label{eq5}
2H'(t)\Psi(t)&\leq& Q^{1,0}_{g_t}(\Sigma,\partial\Sigma)-\inf R^Mvol(\Sigma_t)^{\frac{2}{n-1}}\nonumber\\
&\leq&\sigma^{1,0}(\Sigma,\partial\Sigma)-\inf R^Mvol(\Sigma_t)^{\frac{2}{n-1}},
\end{eqnarray}
where we used that $Q^{1,0}_{g_t}(\Sigma,\partial\Sigma)\leq\sigma^{1,0}(\Sigma,\partial\Sigma)$ for each $t\in(-\varepsilon,\varepsilon)$.

As mentioned before, $\widetilde{g}_{0} =u_0^{\frac{4}{n-3}}g_0$ is a metric which has scalar curvature equal
to $\inf R^M$ and zero mean curvature on the boundary. Moreover, since $\Sigma$ is infinitesimally rigid and using identities (\ref{gauss}) and (\ref{free}),
we also have that $R_{g_0}=\inf R^M$ and $k_{g_0}=0$, then the Maximum Principle implies that $u_0\equiv 1$. We include the argument for completeness.
Define $w=u_0^{\frac{4}{n-3}}-1$. Therefore (\ref{minimi0}) is equivalent to

\begin{equation}
\left\{\begin{matrix}
\Delta_{\Sigma}w+h(x)w=0\ & \textrm{in $\Sigma$}\\
\frac{\partial w}{\partial \nu}=0 & \textrm{on $\partial\Sigma$,}
\end{matrix}\right.
\end{equation}
where $h(x)=\frac{\inf R^M(n-3)^2}{16(n-2)}(u_0+u_0^{\frac{n-2}{n-3}}+u_0^{\frac{n-1}{n-3}}+u_0^{\frac{n}{n-3}})u_0^{\frac{n-7}{n-3}}<0$. It follows from uniqueness for the Neumann problem
that $w\equiv0$ \cite[Theorem 3.6]{GT}. Hence, we must have $u_0\equiv 1$.

On the other hand, we also have that $\ell_0\equiv1$.
By continuity, we can find a positive constant  $K_1$ such that $\Psi(t)>K_1$ for all $t\in(-\varepsilon,\varepsilon)$.

Combining (\ref{eq5}) and equality (\ref{ineq}) we infer

\begin{eqnarray}\nonumber
H'(t)&\leq& -\frac{\inf R^M}{2K_1}(vol(\Sigma_t)^{\frac{2}{n-1}}-vol(\Sigma)^{\frac{2}{n-1}})\nonumber\\
&=&-\frac{\inf R^M}{K_1}\int_0^t\Big(\frac{d}{ds}vol(\Sigma_s)\Big)vol(\Sigma_s)^{\frac{3-n}{n-1}}ds.\nonumber
\end{eqnarray}

As a consequence of (\ref{RV}) we have
\begin{eqnarray}\label{respct}
H'(t)\leq -\frac{\inf R^M}{(n-1)K_1}\int_0^t vol(\Sigma_s)^{\frac{3-n}{n-1}}H(s)\int_{\Sigma}\ell_sd\sigma_sds.
\end{eqnarray}

Suppose by contradiction that there exists $t_0\in(0, \varepsilon)$ such that $H(t_0)>0$. Consider $\tau=\inf\{t\in[0, t_0];\;H(t)\geq H(t_0) \}$.
We claim that $\tau=0$. In fact, if $\tau>0$ the Mean Value Theorem implies that there exists $t_1\in(0,\tau)$ such that
\begin{equation}\label{meancur}
H'(t_1)=\frac{1}{\tau}H(\tau).
\end{equation}

Now we use (\ref{meancur}) in (\ref{respct}) to obtain

\begin{equation}\label{eqest}
H(\tau)\leq-\frac{\inf R^M \tau}{(n-1)K_1}\int_0^{t_1} H(s)\xi(s)ds\leq-\frac{\inf R^M \tau}{(n-1)K_1}\int_0^{t_1} H(\tau)\xi(s)ds,\nonumber\\
\end{equation}\nonumber
where $\xi(s)=vol(\Sigma_s)^{\frac{3-n}{n-1}}\big(\int_{\Sigma}\ell_sd\sigma_s\big)$ and we used, by definition of $\tau$,
that $H(t)\leqslant H(t_0)=H(\tau)$ for all $t\in[0,\tau]$.

We can also find a positive constant $K_0$ such that $\xi(t)<K_0^{-1}K_1$. Choosing
 $\varepsilon>0$ such that $\varepsilon^2<-\frac{(n-1)K_0}{\inf R^M}$ we get

\begin{equation}\nonumber
H(\tau)\leq -\frac{\inf R^M}{K_0(n-1)} H(\tau)\varepsilon^2<H(\tau),
\end{equation}
which gives a contradiction.

Since $\tau=0$, it follows that $H(0)\geq H(t_0)>0$, so we get again the desired contradiction. Therefore $H(t)\leq0$ for
$t\in[0,\varepsilon)$. In a similar way we deduce that $H(t)\geq0$ for $t\in(-\varepsilon,0]$.

By using (\ref{RV}), we conclude that $vol(\Sigma_t)\leq vol(\Sigma)$ for all $t\in(-\varepsilon,\varepsilon)$.

\begin{case} $\inf R^M=0$ and $\sigma^{1,0}(\Sigma,\partial\Sigma)\leqslant0.$
\end{case}

By definition of the Yamabe invariant, we have $H'(t)\leq0$ for every  $t\in(-\varepsilon,\varepsilon)$. Therefore $H(t)\leqslant H(0)=0$ for $t\in[0,\varepsilon)$
and $H(t)\geqslant H(0)=0$ for $t\in(-\varepsilon,0]$. Thus, $vol(\Sigma)\geq vol(\Sigma_t)\;\;\forall \,t\in(-\varepsilon,\varepsilon).$

\end{proof}

Finally, after these preparations, we are now able to complete the proof of the local splitting in Theorem \ref{princ}.

\begin{proposition}\label{final}
 If $\Sigma$ is infinitesimally rigid, then $\Sigma$ has a neighborhood in $M$ which is isometric to
 $((-\varepsilon,\varepsilon)\times \Sigma,dt^2+g)$ for some $\varepsilon>0$ and the induced metric $g$ on $\Sigma$ is Einstein.
\end{proposition}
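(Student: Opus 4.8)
The plan is to push the volume comparison of Proposition~\ref{proimp} to its equality case, read off the full geometric rigidity of the foliation $\{\Sigma_t\}$ produced in Proposition~\ref{folhea}, and then recognize the resulting infinitesimal data as a genuine metric product.

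First I would pin down that every leaf is minimal. Since $\Sigma$ is locally volume-minimizing we have $vol(\Sigma)\leq vol(\Sigma_t)$ for all $t\in(-\varepsilon,\varepsilon)$, while Proposition~\ref{proimp} supplies the reverse inequality $vol(\Sigma)\geq vol(\Sigma_t)$. Hence $vol(\Sigma_t)\equiv vol(\Sigma)$ is constant in $t$. Differentiating and invoking the first variation formula (\ref{RV}), namely $\frac{d}{dt}vol(\Sigma_t)=H(t)\int_{\Sigma_t}\ell_t\,d\sigma_t$, together with the positivity $\ell_t>0$ (so that $\int_{\Sigma_t}\ell_t\,d\sigma_t>0$), I conclude $H(t)\equiv0$, and therefore also $H'(t)\equiv0$: the whole foliation is by minimal free boundary leaves.

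The core of the proof is then the analysis of the equality cases in Proposition~\ref{proimp}, which must all hold since the volume is constant and $H(t)\equiv0$. In passing from (\ref{eq1}) to (\ref{eq2}) one discards the nonpositive terms $-\int_{\Sigma}H(t)^2u_t^2\,d\sigma_t$ and $-\int_{\Sigma}\|h^{\Sigma_t}\|^2u_t^2\,d\sigma_t$ and replaces $-R^M_t$ by $-\inf R^M$; equality therefore forces $\|h^{\Sigma_t}\|^2=0$, so each leaf is totally geodesic, and $R^M_t=\inf R^M$ along each $\Sigma_t$, hence $R^M\equiv\inf R^M$ on the whole neighborhood. Next, the Cauchy inequality used with $\epsilon(t)=\ell_t/u_t$ is an equality precisely when $\nabla_t\ell_t=(\ell_t/u_t)\nabla_tu_t$, and the estimate $2\|\nabla_tu_t\|_{g_t}^2\leq a_n\|\nabla_tu_t\|_{g_t}^2$ (which relies on the strict inequality $a_n>2$) is an equality only if $\nabla_tu_t=0$. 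Thus $u_t$ is constant on each leaf, whence $\nabla_t\ell_t=0$ as well, so the lapse $\ell_t$ is constant on each $\Sigma_t$. Finally, identity (\ref{free}) forces $H^{\partial M}=0$ and $\Pi(N_t,N_t)=0$ along each $\partial\Sigma_t$, compatibly with the free boundary condition built into Proposition~\ref{folhea}.

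It remains to recognize the product. Because $\ell_t$ is constant on each leaf, I would reparametrize $t$ by arclength along the normal trajectories so that $\ell_t\equiv1$; constancy of the lapse on leaves then makes the integral curves of the unit normal $N_t$ geodesics meeting each leaf orthogonally, and combined with the fact that every $\Sigma_t$ is totally geodesic this shows that the unit normal field of the foliation is parallel, $\nabla N_t\equiv0$. A parallel unit normal gives a local isometric splitting, so the pulled-back metric takes the form $dt^2+g_t$; the standard evolution identity $\frac{\partial}{\partial t}g_t=2\ell_t h^{\Sigma_t}=0$ (or directly the parallelism of $N_t$) forces $g_t\equiv g$. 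Hence the neighborhood is isometric to $((-\varepsilon,\varepsilon)\times\Sigma,\,dt^2+g)$, the metric $g$ is Einstein because $\Sigma$ is infinitesimally rigid (Proposition~\ref{propig}), and $\partial\Sigma$ is minimal in the induced metric. I expect the genuinely delicate point to be this last passage from the pointwise infinitesimal data to an honest isometry: one must verify that the normal trajectories stay orthogonal to the leaves and that the free boundary is respected all along, whereas extracting the rigidity from the equality cases is essentially careful bookkeeping once one tracks exactly which inequalities were spent in Proposition~\ref{proimp}.
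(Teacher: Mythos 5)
Your proposal is correct, and it reaches the splitting by the same overall strategy as the paper: constancy of $vol(\Sigma_t)$ from Proposition \ref{proimp} plus local minimization, minimality of every leaf via the first variation (\ref{RV}), leaf-wise rigidity, constancy of the lapse on each leaf, and then recognition of the product metric. The difference lies in how you extract the leaf-wise rigidity and the lapse constancy. You re-run the inequality chain in the proof of Proposition \ref{proimp} and harvest all its equality cases (vanishing of $h^{\Sigma_t}$, $R^M\equiv\inf R^M$, $\nabla_t u_t=0$ via the strict inequality $a_n>2$, hence $\nabla_t\ell_t=0$ via equality in Cauchy--Schwarz, and $H^{\partial M}=0$, $\Pi(N_t,N_t)=0$ on the boundary). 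The paper instead takes a shortcut: since $vol(\Sigma_t)=vol(\Sigma)$, each leaf is itself locally volume-minimizing, hence infinitesimally rigid by Proposition \ref{propig} (resp.\ \ref{propig2}), and the constancy of $\ell_t$ on each leaf then follows from Lemma \ref{rambr}, because $H'(t)=0$, $\mathcal{L}(t)=\Delta_t$ and $\Pi(N_t,N_t)=0$ turn the lapse into a solution of a homogeneous Neumann problem, so it must be constant on $\Sigma_t$. Your route is more self-contained analytically, at the cost of one point you use only tacitly: for the chain of inequalities to collapse to equalities you need not only $H'(t)\equiv 0$ on the left but also that the right-hand end, $\sigma^{1,0}(\Sigma,\partial\Sigma)-\inf R^M\, vol(\Sigma_t)^{\frac{2}{n-1}}$, vanishes; this is exactly equality in (\ref{ineq}) (Case I) or $\sigma^{1,0}(\Sigma,\partial\Sigma)=0=\inf R^M$ (Case II), both consequences of infinitesimal rigidity of $\Sigma$, and it deserves an explicit line. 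Likewise, your deduction of $\Pi(N_t,N_t)=0$ needs the small intermediate remark that $u_t$ constant makes $g_t$ homothetic to $\widetilde{g}_t$, so $\kappa_t=0$, and then (\ref{free}) applies. The final passage to the isometry (unit lapse after reparametrization, geodesic normal trajectories, $\frac{\partial}{\partial t}g_t=2\ell_t h^{\Sigma_t}=0$) coincides with the paper's argument that $N_t$ is parallel and $\mathfrak{f}(t,x)=\exp_x(tN(x))$ is an isometry.
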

\begin{proof}
Let $\Sigma_t\subset M$, $t\in(-\varepsilon,\varepsilon)$ be free boundary hypersurfaces given by Proposition \ref{folhea}.
From Proposition \ref{proimp} we conclude that $vol(\Sigma_t)\leq vol(\Sigma)$
for all $t\in(-\varepsilon,\varepsilon)$. But, since $\Sigma$ is locally volume-minimizing we obtain
$$
vol(\Sigma_t)=vol(\Sigma)
$$
for all $t\in(-\varepsilon,\varepsilon)$. In particular, each $\Sigma_t$ is volume-minimizing.
Therefore, each $\Sigma_t$ is infinitesimally rigid.

It follows from Lemma \ref{rambr} that, since the lapse function satisfies the homogeneous Neumann problem, $\ell_t$
is constant (as function of $t$) at each $\Sigma_t$. The function $\mu(t,x)=0$ and the vector field $N_t$ is parallel for all $(t,x)\in(-\varepsilon,\varepsilon)\times\Sigma$ (see \cite{MM} or \cite{N}) and its flow is the exponential map, i.e., $\mathfrak{f}(t, x)=\exp_x (tN(x))$ $\forall x \in\Sigma$ which
is an isometry for all $t\in(-\varepsilon,\varepsilon)$. Hence, the metric of $M$ near $\Sigma$ must split as $dt^2+g$.

\end{proof}

\section{Proof of Theorem \ref{quart}}\label{five}

We begin with the following infinitesimal rigidity which was inspired by \cite[Theorem 3.1]{Y}. (Compare Proposition \ref{propig} and \ref{propig2}).
\begin{proposition}\label{lastprop}
Let $M$ be an $n$-dimensional  Riemannian  manifold with scalar curvature $R^M\geq -n(n-1)$ and mean convex boundary. Assume that $M$
contains a two-sided compact properly embedded free boundary hypersurface $\Sigma$ such that $\sigma^{1,0}(\Sigma,\partial\Sigma)\leq0$.
If $\Sigma$ is $\mathcal{J}$-stable, then $R^M=-n(n-1)$ and $Ric(N,N)=-(n-1)$ along $\Sigma$, $\Sigma$ is umbilic,
$\sigma^{1,0}(\Sigma,\partial\Sigma)=0$, the mean curvature of $\partial\Sigma$ in $\Sigma$ is equal to zero,
$H^{\partial M}=0$ at every point of $\partial\Sigma$, $\Pi(N,N)=0$  along $\partial\Sigma$
and the induced metric on $\Sigma$ is Ricci flat.
\end{proposition}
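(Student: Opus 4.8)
The plan is to mirror the equality analysis of Propositions~\ref{propig} and~\ref{propig2}, adapting it to the CMC functional $\mathcal{J}$ and to the sharp bound $R^M\geq -n(n-1)$. Since $\Sigma$ is $\mathcal{J}$-stable it is in particular a critical point of $\mathcal{J}$, so by (\ref{primvaria}) one has $H=n-1$ and $\Sigma$ is free boundary, and moreover $\mathcal{J}''(0)=\delta^2\Sigma(\varphi,\varphi)\geq 0$ for every $\varphi\in C^\infty(\Sigma)$. Integrating (\ref{secvar}) by parts and expanding $\mathcal{L}$, the two boundary terms containing $\partial_\nu\varphi$ cancel, which leaves the clean stability inequality
\[
\int_\Sigma \|\nabla\varphi\|_g^2\,d\sigma \geq \int_\Sigma \big(Ric(N,N)+\|h^\Sigma\|^2\big)\varphi^2\,d\sigma + \int_{\partial\Sigma}\Pi(N,N)\varphi^2\,d\sigma_{\partial\Sigma}.
\]

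Next I would feed in the Gauss equation (\ref{gauss}) with $H=n-1$, replace $\Pi(N,N)$ by $H^{\partial M}-\kappa_g$ using (\ref{free}), and enlarge $2\|\nabla\varphi\|_g^2$ to $a_n\|\nabla\varphi\|_g^2$ via $a_n=\tfrac{4(n-2)}{n-3}>2$, exactly as in the proof of Theorem~\ref{estimate}. This converts the stability inequality into
\[
\int_\Sigma \big(a_n\|\nabla\varphi\|_g^2+R_g\varphi^2\big)\,d\sigma+2\int_{\partial\Sigma}\kappa_g\varphi^2\,d\sigma_{\partial\Sigma}\geq \int_\Sigma \big(R^M+(n-1)^2+\|h^\Sigma\|^2\big)\varphi^2\,d\sigma+2\int_{\partial\Sigma}H^{\partial M}\varphi^2\,d\sigma_{\partial\Sigma}.
\]
The decisive input is the Cauchy--Schwarz estimate $\|h^\Sigma\|^2\geq H^2/(n-1)=n-1$, with equality exactly when $\Sigma$ is umbilic: together with $R^M\geq -n(n-1)$ and $H^{\partial M}\geq 0$ it renders the right-hand side nonnegative through the exact cancellation $-n(n-1)+(n-1)^2+(n-1)=0$. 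Hence the numerator of $Q_g^{1,0}(\varphi)$ is nonnegative for all positive $\varphi$, so $Q_g^{1,0}(\Sigma,\partial\Sigma)\geq 0$; since $Q_g^{1,0}(\Sigma,\partial\Sigma)\leq\sigma^{1,0}(\Sigma,\partial\Sigma)\leq 0$, I obtain $\sigma^{1,0}(\Sigma,\partial\Sigma)=0$.

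Then comes the rigidity from equality. Escobar's theorem provides a minimizer $\varphi_{min}>0$ because $Q_g^{1,0}(\Sigma,\partial\Sigma)\leq 0<Q_g^{1,0}(\mathbb{S}^{n-1}_+,\partial\mathbb{S}^{n-1}_+)$, and since $Q_g^{1,0}(\Sigma,\partial\Sigma)=0$ its numerator vanishes, forcing every inequality above to be an equality at $\varphi_{min}$. Equality in the step $a_n>2$ gives $\nabla\varphi_{min}=0$, so $\varphi_{min}$ is a positive constant; the pointwise equalities then yield $R^M=-n(n-1)$, $\|h^\Sigma\|^2=n-1$ (so $\Sigma$ is umbilic) and $H^{\partial M}=0$. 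As $\delta^2\Sigma(\varphi_{min},\varphi_{min})=0$ with $\varphi_{min}$ constant, the constant function realizes the first eigenvalue $\lambda_1=0$ of the Robin problem (\ref{rob}); substituting it there gives $Ric(N,N)=-\|h^\Sigma\|^2=-(n-1)$ on $\Sigma$ and $\Pi(N,N)=0$ on $\partial\Sigma$, whence $\kappa_g=0$ by (\ref{free}), i.e. $\partial\Sigma$ is minimal in $\Sigma$, and (\ref{gauss}) forces $R_g=0$.

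It remains to promote scalar flatness to Ricci flatness, which I expect to be the main obstacle. The plan is to reuse the metric-deformation argument of Proposition~\ref{propig}: for an arbitrary symmetric $(0,2)$-tensor $h$ set $g(r)=g+rh$, solve the Yamabe problem to get $u_r>0$ with $\tilde g(r)=u_r^{4/(n-3)}g(r)$ of constant scalar curvature $Q_{g(r)}^{1,0}(\Sigma,\partial\Sigma)\leq 0$ and minimal boundary, and observe that $Q_{g(r)}^{1,0}(\Sigma,\partial\Sigma)\leq\sigma^{1,0}(\Sigma,\partial\Sigma)=0=Q_g^{1,0}(\Sigma,\partial\Sigma)$ makes $r=0$ a maximum, so $\tfrac{\partial}{\partial r}Q_{g(r)}^{1,0}(\Sigma,\partial\Sigma)|_{r=0}=0$. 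The delicate difference from Proposition~\ref{propig} is that the Yamabe constant here is $0$ rather than negative, so uniqueness of $\tilde g(0)$ must instead be read off the scalar-flat, minimal-boundary problem (\ref{minimi0}) with $C=0$, $R_g=0$, $\kappa_g=0$, which reduces to $\Delta_\Sigma u_0=0$ with $\partial_\nu u_0=0$ and hence $u_0$ constant, giving $\tilde g(0)=g$ up to scaling. With $\tilde g(0)=g$ secured, differentiating the scalar curvature of the conformal family and choosing $h$ to be the traceless Ricci tensor produces, after Stokes and the vanishing of $\int_{\partial\Sigma}\langle\nabla R,\nu\rangle=\int_\Sigma\Delta_\Sigma R\,d\sigma=0$ (recall $R_g=0$), an identity of the form $\int_\Sigma\|Ric^\Sigma-\tfrac{R_g}{n-1}g\|^2\,d\sigma=0$. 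Since $R_g=0$ the traceless Ricci tensor equals $Ric^\Sigma$, so $Ric^\Sigma\equiv 0$ and $\Sigma$ is Ricci flat, completing the proof.
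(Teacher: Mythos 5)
Your proposal is correct and follows essentially the same route as the paper's proof: $\mathcal{J}$-stability plus the Gauss equation (\ref{gauss}), the free boundary identity (\ref{free}) and the bound $a_n>2$ force the numerator of $Q_g^{1,0}$ to be nonnegative (your Cauchy--Schwarz step $\|h^\Sigma\|^2\geq H^2/(n-1)$ is just the paper's decomposition via the traceless tensor $\mathring{h}^\Sigma=h^\Sigma-g$), giving $\sigma^{1,0}(\Sigma,\partial\Sigma)=0$ and the pointwise rigidity, after which the Robin problem (\ref{rob}) and the metric-deformation argument of Proposition \ref{propig} yield the remaining conclusions. If anything, you are more careful than the paper at the one genuinely delicate point it leaves implicit, namely that uniqueness of the Yamabe representative at zero Yamabe constant holds only up to scaling and must be extracted from the harmonic Neumann problem $\Delta_\Sigma u_0=0$, $\partial_\nu u_0=0$.
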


\begin{proof}
From definition of $\mathcal{J}$-stability and identities (\ref{gauss}) and (\ref{free}) we infer
\begin{eqnarray}\nonumber
0&\leq&\int_{\Sigma}\Big(2\|\nabla \varphi\|_g^2+(R_g-(R^M+n(n-1))-\|\mathring{h}^{\Sigma}\|^2)\varphi^2\Big)d\sigma\\
& +&\int_{\partial\Sigma}2(-H^{\partial M}+\kappa_g)\varphi^2d\sigma_{\partial\Sigma},\nonumber
\end{eqnarray}
where $\mathring{h}^{\Sigma}=h^{\Sigma}-g$ is the trace free part of $h^{\Sigma}$.

By using $\frac{4(n-2)}{n-3}>2$ for all $n\geq4$, $R^M\geq-n(n-1)$, $\|\mathring{h}^{\Sigma}\|^2\geq0$ and that the boundary $\partial M$ is
mean convex we conclude

$$
\int_{\Sigma}\Big(a_n\|\nabla \varphi\|_g^2+R_g\varphi^2\Big)d\sigma+\int_{\partial\Sigma}2\kappa_g\varphi^2d\sigma_{\partial\Sigma}\geq0.
$$
Then $Q^{1,0}(\Sigma,\partial\Sigma)\geq0$  and, hence, by the definition of the Yamabe invariant (\ref{yamainv}) $\sigma^{1,0}(\Sigma,\partial\Sigma)\geq 0$. On the other hand, $\sigma^{1,0}(\Sigma,\partial\Sigma)\leq 0$
by supposition which implies $\sigma^{1,0}(\Sigma,\partial\Sigma)=0$. Moreover, we obtain $R^M=-n(n-1)$ and
$\|\mathring{h}^{\Sigma}\|^2=0$ along $\Sigma$. Essentially by the same argument used in Proposition \ref{propig}, the other assertions follow.
The equality (\ref{gauss}) and the fact that $\Sigma$ is Einstein imply that the induced metric on $\Sigma$ is Ricci flat, which completes the proof.
\end{proof}

Now, we prove a local warped product splitting result.

\begin{proof}[Proof of Theorem \ref{quart}]
 From Proposition \ref{lastprop}, we have that $\mathcal{L}=\Delta_{\Sigma}$. Then we can construct by Proposition \ref{folhea} a
foliation around $\Sigma$ by constant mean curvature free boundary hypersurfaces.

Since $\Sigma$ locally minimizes the functional $vol(\Sigma)-(n-1)\mathcal{V}(0),$ we must have $H(0)= n-1$. We need to show that
$H(t)\leq n-1$ for $t\in[0,\varepsilon)$. Otherwise, there exists $\tau\in(0,\varepsilon)$ such that $H(\tau)>n-1$
and (decreasing $\varepsilon$ if necessary) $H'(\tau)>0$.

Let $g_t$ be the induced metric on $\Sigma$ and $\widehat{g}_{\tau}=u_{\tau}^{\frac{4}{n-2}}g_{\tau}$ be
a conformally related metric with constant scalar curvature and boundary being a minimal hypersurface.
Taking in account that $R^M\geq -n(n-1)$ and $\|h^{\Sigma_{\tau}}\|^2\geq\frac{H(\tau)^2}{n-1}> n-1$,
we obtain that $R^M+\|h^{\Sigma_{\tau}}\|^2+H(\tau)^2>0$.
Then we deduce from (\ref{eq1}) that
$$
2H'(\tau)\ell_{\tau}^{-1}<-2\ell_{\tau}^{-1}\Delta_{\tau}\ell_{\tau}+R_{\tau}.
$$
Proceeding as in Proposition \ref{proimp}, we can show that

$$
0<H'(\tau)\Psi(\tau)<\sigma^{1,0}(\Sigma,\partial\Sigma),
$$
where $\Psi(\tau)= (\int_{\Sigma}u_{\tau}^{\frac{2(n-1)}{n-3}}d\sigma)^{\frac{3-n}{n-1}}\int_{\Sigma}\frac{u_\tau^2}{\ell_\tau}d\sigma_\tau$.
Since $\sigma^{1,0}(\Sigma,\partial\Sigma)\le0$ we arrive at a contradiction. Hence, $H(t)\leq n-1$ for $t\in [0,\varepsilon)$.

By the first variation formula of $vol(\Sigma)-(n-1)\mathcal{V}(0)$ (\ref{primvaria}), it follows that $\mathcal{J}'(t)\leq0$ for all
$t\in[0,\varepsilon)$. We must have $\mathcal{J}'(t)=0$ for $t\in[0,\varepsilon)$ since $\mathcal{J}$ achieves a minimum at $t=0$.
Hence $H(t)=n-1$ for all $t\in[0,\varepsilon)$. A similar argument shows that $H(t)= n-1$ for $t\in (-\varepsilon,0]$. By Lemma \ref{rambr} and the free
boundary condition, we obtain that each $\Sigma_t$ is $\mathcal{J}$-stable. Thus, we get that each $\Sigma_t$ is infinitesimally rigid in the sense of Proposition \ref{lastprop}.

Up to isometry, the metric in a sufficiently small neighborhood of $\Sigma$ can be written as $g_M=\ell_t dt^2+g_{t}$.
By Lemma \ref{rambr}, the lapse function is constant as function of $t$ on $\Sigma_t$. 
Then by a change of the coordinate $t$ we may assume that $\ell_t=1$.

The induced metric on $\Sigma_t$ evolves as
$$\frac{\partial}{\partial t}(g_{ij})_t=2\ell_t(g_{ij})_t.$$

Therefore
\[
g_{t}=e^{2t}g
\]
for all  $t\in(-\varepsilon,\varepsilon)$. Thus, we deduce that the induced metric  by $\mathfrak{f}(t,x)$ on $(-\varepsilon,\varepsilon)\times\Sigma$ is given as follows $dt^2+e^{2t}g$
that is Ricci flat in $M$.

\end{proof}

\end{document}